\newtheorem{theorem}{Theorem}
\newtheorem{proposition}{Proposition}
\newtheorem{lemma}{Lemma}
\newtheorem{corollary}{Corollary}
\newtheorem{question}{Question}
\newtheorem{remark}{Remark}
\newtheorem{definition}{Definition}
\newcommand{\R}{\mathbb{R}}
\DeclareMathOperator{\dist}{dist}
\DeclareMathOperator{\Vol}{Vol}
\DeclareMathOperator{\diam}{diam}
\DeclareMathOperator{\length}{length}
\DeclareMathOperator{\Jac}{Jac}
\title{Volume growth of horospheres in diagonalizable Heintze groups}
\author{Gilles Courtois \thanks{Sorbonne Universit\'e and Universit\'e Paris Cit\'e, CNRS, 4 place Jussieu 75252 Paris Cedex 05; partially funded by ANR-23-CE40-0012-03
HilbertXField; gilles.courtois@imj-prg.fr} \quad Pablo Lessa \thanks{Centro de Matemática, Facultad de Ciencias, Universidad de la República, Igua 4225, 11400 Montevideo, Uruguay; lessa@cmat.edu.uy}  \quad Emiliano Sequeira \thanks{Centro de Matemática, Facultad de Ciencias, Universidad de la República, Igua 4225, 11400 Montevideo, Uruguay; partially founded by \emph{Fondo Vaz Ferreira-DICYT-MEC}, project FVF$\_2023\_522$; esequeira@cmat.edu.uy}}
\begin{document}
\maketitle

\begin{abstract}
We study the volume growth of horospheres in a Heintze group of the form $\R\ltimes_A\R^d$ with $A$ a diagonal derivation. We conclude that the isometry and quasi-isometry classes of horospheres (with their intrinsic geometry) coincide. 
Furthermore, if $A$ is not a scalar multiple of the identity, then there are exactly two such classes, characterized by their volume growth, which we calculate explicitly.    
\end{abstract}

\section{Introduction}

Simply connected manifolds of negative curvature have been thoroughly studied from the geometric and dynamical perspectives.   The horospheres of such a manifold $M$ comprise a family of hypersurfaces which may be viewed either as spheres centered at a point on the ideal boundary $\partial M$ of $M$ or as images by the projection $\pi : T^1M \to M$ of leaves of the strong stable foliation of the geodesic flow of $M$.

Very little is known about the \emph{intrinsic} geometry of the horospheres (see however \cite{1977heintze}), with the notable exception of symmetric spaces of negative curvature, where the horospheres, with their induced metric, are isometric to the Euclidean space or a $2$-step nilpotent Lie group with a left invariant metric. 

If \(M\) is the universal cover of a \emph{closed} negatively curved manifold of dimension at least $3$, the fact that {\it one} horosphere of $M$ is isometric to the Euclidean space implies that $M$ is isometric to the real hyperbolic space, up to an homothety,
\cite{BCH}. However, in the case $M$ does not cover a closed manifold, the existence of one horosphere isometric to the Euclidean space does not imply that $M$ has constant negative curvature even when \(M\) is homogeneous, as we will see.

\bigskip
The geometric property of the horospheres we are interested in is the volume growth of their induced metric. 
We say a complete \(C^1\) Riemannian manifold \((N,g_N)\) has volume growth of order \(r^k\) where \(k \ge 0\), if for each \(x \in N\) there exists \(C_x > 1\) and \(R_x > 0\) such that
\begin{equation}\label{defgrowth}
C_x^{-1}r^k \le \Vol\left(B(x,r)\right) \le C_xr^k,
\end{equation}
for all \(r > R_x\), where \(\Vol\) denotes the Riemannian volume and \(B(x,r)\) the ball of radius \(r\) centered at \(x\). Observe that to verify this property it is enough to show that \eqref{defgrowth} is true for some $x\in N$.

\smallskip
It is known that in a simply connected $(d+1)$-dimensional manifold $M$ with negative pinched sectional curvature 
$-a^2 \leq sec_M \leq -b^2 <0$, the horospheres have volume growth satisfying
$C^{-1} r ^{db/a} \leq {\rm Vol} B(x,r) \leq C r^{da/b}$. When there is a cocompact discrete group $\Gamma$ of isometries of $M$, the geodesic flow of $M$ is Anosov and the leaves of the strong stable foliation of the geodesic flow are dense in $T^1 (M \slash \Gamma)$, \cite[Theorem 15]{An}. Since the leaves are projected by $\pi : T^1 M \rightarrow M$ onto horospheres, 
one may expect that the recurrence properties of the leaves provide some kind of homogeneity of the horospheres and we formulate the following question.
\begin{question}\label{Question}
Let $M\slash \Gamma$ be a closed negatively curved manifold. Does there exists  \(k > 0\) such that every horosphere of $M$ has volume growth of order $k$?
\end{question}

There is a surjective mapping from the unit tangent bundle \(T^1M\) to the set of horospheres.  This mapping is defined by  $v\in T^1 M  \mapsto H(v)$, where $H(v)$ is the horosphere centered at $c_v (+\infty)$ and 
passing through $\pi (v)$, and $c_v$ is the geodesic ray such that $c_v (0) = \pi (v)$
and $\dot c _v (0) = v$.

Notice that it is not even known if, in the cocompact setting, the horospheres $H(v)$ have volume growth of order $k(v)$, a priori depending on $v$. However, assuming that such a function 
$v\in T^1 M  \to k(v)$ exists, then it has to be almost surely constant with respect to any geodesic flow invariant and ergodic measure on $T^1 M\slash \Gamma$. Indeed,
Let $\overline k : T^1 M \to \R$ be the function defined by 
$$\overline k (v) := \limsup _{r\to \infty} \frac{\log {\rm Vol} (B_{H(v)}(r))}{\log r}.$$
The function $\overline k$ is $\Gamma$-invariant and the induced function on the quotient,
$\overline{\mathcal K} (v) :T^1 M\slash \Gamma \to \R$, is almost surely constant with
respect to any ergodic measure on $T^1 M\slash \Gamma$. The same holds for 
$\underline{k} (v) := \liminf _{r\to \infty} \frac{\log {\rm Vol} (B_{H(v)}(r))}{\log r}.$
In particular, if for almost every $v\in T^1 M \slash \Gamma$, the horosphere $H(v)$ has polynomial 
growth of order $k(v)$, then $k(v)$ is almost surely constant.

\smallskip

The main goal of this paper is to study the growth of the horospheres of a family of homogeneous spaces of negative curvature. E. Heintze has studied the class of {\it homogeneous}
simply connected negatively curved manifolds. He showed that such manifolds are isometric to  solvable Lie groups endowed with a left invariant metric, \cite[Proposition 1]{1974heintze}.  He also gave a necessary and sufficient condition for a solvable Lie group to carry a left invariant negatively curved metric, \cite[Theorem 3]{1974heintze}. 

\smallskip

We will focus on the following particular case of these Heintze groups, defined as $G_A := \mathbb R \ltimes_A \mathbb R^d$, where $A$ is an $d\times d$ {\it diagonal} matrix with positive eigenvalues 
$\{\lambda _1 \leq \lambda _2\leq ....\leq \lambda _d\}$ and $\mathbb R$ acts on
$\mathbb R ^d$ by $(y,x) \in \R \times \R^d \rightarrow e^{yA} x$.  The metric on $G_A$ is defined as 
\begin{equation}\label{metricheintze}
g:= dy^2 + \sum _{i=1}^{d} e^{-2\lambda _i y} dx_i ^2.
\end{equation}
This defines a negatively curved Riemannian manifold (with sectional curvature bounded between $-\lambda_d^2$ and $-\lambda_1^2$, see Corollary \ref{CorCurvatura}), which we will call a \textit{real diagonal Heintze group}.

\medskip

Before stating our main theorem, let us recall a few important features of simply connected manifolds with negative curvature. Let $M$ denote a $(d+1)$-dimensional Riemannian manifold endowed with a metric of negative sectional curvature. It follows from the Cartan-Hadamard theorem that $M$ is diffeomorphic to $\mathbb{R}^{d+1}$. The geometric boundary $\partial M$ of $M$ is the set of equivalence classes of geodesic rays in $M$, where two geodesic rays are equivalent if they remain at a bounded Hausdorff distance. We recall that, in our context, it is homeomorphic to $\mathbb{S}^d$.

Fix a base point $x_0\in M$. For a unit tangent vector $v\in T_{x_0} M$ let $c_{v}$ be the unique geodesic ray determined by  $c_{v}(0)=x_0$ and $\dot c _{v}(0) =v$. It is well known that the map $v \in T_{x_0}  M \mapsto [c_{v}] \in \partial M$ defines a homeomorphism between the unit sphere in $T_{x_0} M$ and $\partial M$. Given a point $\xi = [c_{v}] \in \partial M$, the Busemann function 
(or horofunction) $b_\xi$ is then defined by 
$$b_{\xi}(x)= \lim_{t\to \infty} dist(x, c_{v} (t)) - dist(x_0, c_{ v} (t))\ \text{ for }x\in M.$$ 

When the sectional curvature of $M$ is negatively pinched, i.e., $-a^2 \leq sec \leq -b^2$, it is known that 
the Busemann function $b_\xi$ is $C^2$-smooth for every $\xi\in \partial M$,
\cite{1977heintze}.  Furthermore, 
for any $t\in\mathbb{R}$,
 the level set  $$H_{\xi}(t) =\left\{ x\in M : b_\xi(x)= t\right\}$$ is a $C^2$
submanifold of $M$ diffeomorphic to $\mathbb{R}^{d}$. We call it a {\it horosphere} centred at $\xi$. The sublevel set $$HB_{\xi}(t) =\left\{ x\in {M}: b_\xi(x)\leq  t\right\}$$ is called a {\it horoball} centered at $\xi$.
It follows that  horospheres inherit a complete $C^1$ Riemannian metric induced by the restriction of the metric of $M$. For instance, if 
$M$ is a real hyperbolic manifold,
every horosphere of $M$ is isometric to the Euclidean space $\R^{d}$.  If furthermore, the covariant derivatives of the curvature are uniformly bounded (which occurs for homogeneous Cartan-Hadamard manifolds, as well as for those covering a closed manifold), then horofunctions are of class \(C^\infty\) (see \cite{BCH2} and \cite{shcherbakov}).

Our main result is

\begin{theorem}\label{mainthm}
Let $G_A$ be a real diagonal Heintze group.
Assume \(\lambda_1 < \lambda_d\), then there exist exactly two isometry classes of horospheres in \(G_A\).
One class consists of horospheres isometric to the Euclidean space $\mathbb R^d$, and the other of horospheres which have volume growth of order \(r^{k}\) where
  \[k = \frac{\lambda_1 + \cdots + \lambda_d}{\lambda_1}.\]
\end{theorem}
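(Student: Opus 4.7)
The plan is to classify horospheres by the orbit of their center under $\mathrm{Isom}(G_A)$ and then compute the volume growth of a representative from each orbit.

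The boundary $\partial G_A$ decomposes as $\{\omega^+\}\sqcup\R^d$, where $\omega^+$ is the limit of the upward flow $y\to+\infty$ and $\R^d$ parametrizes the endpoints at infinity of the downward vertical geodesics. The left action of $G_A$ on itself is by isometries; it fixes $\omega^+$ and acts transitively on $\R^d$ (the normal subgroup $\{0\}\times\R^d\subset G_A$ already does so by translation). Hence there are at most two isometry classes of horospheres. Those centered at $\omega^+$ are the level sets $\{y=c\}$ with induced metric $\sum_i e^{-2\lambda_i c}\,dx_i^2$; the dilation $x_i\mapsto e^{\lambda_i c}x_i$ turns this into the Euclidean metric on $\R^d$, giving volume growth of order $r^d$.

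For the other class, fix the horosphere $H$ through the origin $e=(0,0)$ centered at the boundary point $\omega^-$ corresponding to $0\in\R^d$. To start understanding the intrinsic geometry, I would perform a Jacobi field computation along the downward vertical geodesic $\gamma(t)=(-t,0)$: the sectional curvature of $G_A$ in the plane $\partial_y\wedge\partial_{x_i}$ is constant equal to $-\lambda_i^2$ (by the warped-product form of the metric), so the horospherical Jacobi field (bounded as $t\to+\infty$) satisfies $\phi''=\lambda_i^2\phi$ and is proportional to $e^{-\lambda_i t}$. This exhibits the anisotropic contraction rate of horospheres at $\omega^-$ as they bunch up near their center, with direction-dependent coefficient $\lambda_i$, and drives the asymmetry in the intrinsic distance.

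The central claim is that $d_H$ is comparable, for widely separated points, to the weighted quasi-distance $\max_i|u_i|^{\lambda_1/\lambda_i}$ on a parametrization $H\cong\R^d$ (for instance via the diffeomorphism $H\simeq\partial G_A\setminus\{\omega^-\}$ furnished by the outward normal geodesic flow). The upper bound is constructive: for a target displacement with a large component in the $u_i$-direction, one builds a path on $H$ that climbs in the $\partial_y$-direction until the $\omega^+$-horospheres strongly compress the $x_i$-coordinate, traverses laterally there, and descends; balancing altitude against lateral travel yields the exponent $\lambda_1/\lambda_i$. The lower bound is the principal obstacle, because $H$ is not a homogeneous Riemannian manifold --- the stabilizer of $\omega^-$ in $\mathrm{Isom}(G_A)$ is only the one-parameter subgroup $(y,x)\mapsto(y+s,e^{sA}x)$, which permutes horospheres at $\omega^-$ rather than preserving $H$ itself --- so single-point computations cannot be transported globally. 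Establishing it will require comparison with the ambient $G_A$-distance and with the parabolic visual metric on $\partial G_A\setminus\{\omega^-\}$, together with the pinched curvature bounds and the $C^2$-regularity of Busemann functions from \cite{1977heintze}. Once this equivalence is in place, the intrinsic ball of radius $r$ in $H$ corresponds to the box $\prod_i[-r^{\lambda_i/\lambda_1},\,r^{\lambda_i/\lambda_1}]\subset\R^d$, of volume $2^d r^k$ with $k=(\lambda_1+\cdots+\lambda_d)/\lambda_1$. Since $\lambda_1<\lambda_d$ forces $k>d$, the two classes are distinguishable by their growth and hence not isometric, completing the proof.
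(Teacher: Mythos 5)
Your reduction to two classes and the computation for the Euclidean class are correct and match the paper (Proposition \ref{two-isom-class}). For the non-Euclidean class, however, your argument has a genuine gap at exactly the point you flag: the lower bound for $\dist_H$ in terms of the quasi-metric $\max_i|u_i|^{\lambda_1/\lambda_i}$ --- equivalently, the upper bound $\Vol\bigl(B_H(p,r)\bigr)\le C r^k$ --- is asserted but not proved, and the tools you list (parabolic visual metric, pinched curvature, $C^2$ Busemann functions) do not by themselves produce it. A second, unacknowledged problem is the final volume count: the Riemannian volume of $H$ need not be comparable to Lebesgue measure in the parametrization you propose. If you parametrize $H$ by $\partial G_A\setminus\{\omega^-\}$ via normal geodesics, points of $H$ at intrinsic distance about $e^{\lambda_1 t/2}$ from the basepoint correspond to boundary points $u$ with $\max_i|u_i|^{1/\lambda_i}$ of order $e^{-t/2}$ (they accumulate at $0=\omega^-$, not at infinity), and the slab of $H$ at depth $[t,t+1]$ has Riemannian volume of order $e^{(\lambda_1+\cdots+\lambda_d)t/2}$ while its image has Lebesgue measure of order $e^{-(\lambda_1+\cdots+\lambda_d)t/2}$; the Jacobian is unbounded, so ``volume of the box'' does not compute the volume of the ball. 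Both defects can be repaired, but doing so constitutes the bulk of the proof.

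The paper avoids any global two-point distance formula on $H$. It replaces $H$ by an explicit piecewise-smooth approximate horosphere $\mathcal H=\partial\bigl(V\cap\{y\le 0\}\bigr)$ with $V=\{\max_i e^{-\lambda_i y/2}|x_i|\le 1\}$, whose faces are graphs; on $\mathcal H$ one only needs one-point estimates (distance to the compact top face, bounded above by explicit climbing paths and below by a gradient bound for the height function, Lemmas \ref{lowerlemma} and \ref{upperlemma}) together with a direct integration of the induced volume form (Lemma \ref{volsublema}) --- this is where the correct volume element, which your box count skips, actually enters. The transfer to the true horosphere then uses convexity of $V$ and nearest-point projections to produce a quasi-isometry between $H$ and $\mathcal H$ (Proposition \ref{quasi-isometry-prop}), plus uniform control of the volume of unit balls on both spaces, so that the Coulhon--Saloff-Coste isometry-at-infinity theorem (Proposition \ref{coulhonlemma}) equates the growth orders. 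If you wish to keep your route, the missing lower bound on $\dist_H$ would have to be extracted from an analogous convexity/projection argument; as you correctly observe, it does not follow from the Jacobi field computation along the central vertical geodesic alone.
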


We will show that if two horospheres are quasi-isometric, then they must have the same order of volume growth. Thus, Theorem 1 has the following consequence:

\begin{corollary}\label{quasiisom-cor}
Let $G_A$ be a diagonal Heintze group.
Assume \(\lambda_1 < \lambda_d\), then
there exist exactly two quasi-isometry classes of horospheres in $G_A$.    
\end{corollary}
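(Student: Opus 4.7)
The plan is to derive Corollary \ref{quasiisom-cor} from Theorem \ref{mainthm} by showing that the order of polynomial volume growth, as defined in \eqref{defgrowth}, is a quasi-isometry invariant among the horospheres of $G_A$. Since every isometry is a quasi-isometry, Theorem \ref{mainthm} already yields at most two quasi-isometry classes of horospheres. On the other hand, the two isometry classes it exhibits have growth orders $d$ (the Euclidean horospheres) and $k = (\lambda_1 + \cdots + \lambda_d)/\lambda_1$, and under the hypothesis $\lambda_1 < \lambda_d$ one has
\[
k \;=\; 1 + \frac{\lambda_2}{\lambda_1} + \cdots + \frac{\lambda_d}{\lambda_1} \;>\; d,
\]
so the two orders differ. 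The corollary therefore reduces to the quasi-isometry invariance of the growth order among horospheres in $G_A$.

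To establish this invariance I would first show that every horosphere $N \subset G_A$ has uniformly bounded local geometry, in the sense that there exist $v_1, v_2 > 0$, independent of the horosphere and of $x \in N$, with $v_1 \le \Vol(B_N(x,1)) \le v_2$. This is immediate in the Euclidean case; in general it follows from the pinching $-\lambda_d^2 \le sec_{G_A} \le -\lambda_1^2$ (established in Corollary \ref{CorCurvatura}) together with the homogeneity of $G_A$: the shape operator of a horosphere is controlled by the ambient curvature, and homogeneity provides uniform bounds on injectivity radius and volumes of small balls throughout $G_A$, hence on every horosphere. Given a $(K,C)$-quasi-isometry $\varphi : N_1 \to N_2$, I would then choose maximal $1$-separated subsets $Z_i \subset N_i$; bounded geometry yields $\#(Z_i \cap B_{N_i}(x,r)) \asymp \Vol(B_{N_i}(x,r))$ with uniform constants. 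Combining this with the inclusion $\varphi(Z_1 \cap B_{N_1}(x,r)) \subset B_{N_2}(\varphi(x), Kr+C)$ and the symmetric bound for a quasi-inverse of $\varphi$, one obtains two-sided linear comparisons between $\Vol(B_{N_1}(x,r))$ and $\Vol(B_{N_2}(\varphi(x), r))$ with constants depending only on $K$, $C$, and the bounded-geometry data, which suffices to conclude that $N_1$ and $N_2$ have the same polynomial growth order.

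The main obstacle I anticipate is making the bounded-geometry statement precise and uniform across the entire family of horospheres, rather than just homogeneous over a single one. The horospheres centered at the boundary point fixed by the $\R$-dilation factor are visibly homogeneous as Riemannian manifolds, but for horospheres centered at the remaining boundary points I would have to appeal to the explicit coordinates that the proof of Theorem \ref{mainthm} necessarily introduces in order to compute the growth order $r^k$, and read off the required uniform local bounds directly from those coordinates. Once bounded geometry is in place, the net-comparison argument above is routine and Corollary \ref{quasiisom-cor} drops out of Theorem \ref{mainthm}.
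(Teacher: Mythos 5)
Your proposal is correct and follows the same logical skeleton as the paper: Theorem \ref{mainthm} gives at most two classes, the two isometry classes have growth orders $d$ and $k=(\lambda_1+\cdots+\lambda_d)/\lambda_1 > d$, and the corollary reduces to the quasi-isometry invariance of the growth order among horospheres. Where you diverge is in how that invariance is obtained. The paper packages the needed local homogeneity as ``controlled volume'' (uniform two-sided bounds on volumes of balls of each fixed radius, inequality \eqref{controlledvol}), proves it for the Euclidean horospheres trivially and for the non-Euclidean ones by an explicit projection argument onto horizontal horospheres (Proposition \ref{lemmaDoubling} and Lemma \ref{cont-vol-H}, which is the longest step), and then invokes the Coulhon--Saloff-Coste comparison for locally doubling spaces that are isometric at infinity (Proposition \ref{coulhonlemma}, yielding Corollary \ref{Cor3}). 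You instead propose a self-contained discretization: maximal $1$-separated nets, comparison of point counts with ball volumes, and bounded multiplicity of the quasi-isometry on the net. That argument is the standard proof of the Coulhon--Saloff-Coste-type statement and is perfectly valid given the two-sided unit-ball volume bounds; it buys independence from the cited reference at the cost of rewriting a routine lemma. For the bounded-geometry input itself, your appeal to pinched curvature plus the uniform bound on the shape operator of horospheres is sound in principle (Heintze--Im Hof, in particular the comparison \eqref{HIF} between intrinsic and ambient distance on a horosphere, is exactly what makes ``hence on every horosphere'' work), and it is more general than the paper's coordinate computation; but as you yourself flag, it is the step that actually carries the weight, and in the paper it is not a one-liner. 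Note also that in the paper this ingredient is already available ``for free'' when the corollary is reached, since Lemma \ref{cont-vol-H} was needed to prove Theorem \ref{mainthm} in the first place; your plan re-derives it independently.
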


\begin{remark}
If all eigenvalues of $A$ coincide, then $G_A$ has constant sectional curvature and all horospheres have volume growth of order $r^d$.  
\end{remark}

\begin{remark}\label{rmk2}
In the case $\lambda_1 < \lambda_d$, there is a point in $\partial G_A$ that is fixed by every quasi-isometry of $G_A$ (see \cite[Corollary 6.9]{Pan89}). This also follows from \cite[Corollary 1.5]{Xie1}. As a consequence of this property, the Heintze group $G_A$ has no compact quotient. In particular, Theorem \ref{mainthm} does not provide a negative answer to Question \ref{Question}. 
\end{remark}

\begin{remark}
It is shown in \cite[Example 3.3]{Pan89} that the conformal dimension of $\partial G_A\setminus \{\infty\}$ (where $\infty$ is a special point on the boundary at infinity) with the quasi-conformal structure defined by the parabolic visual metric with center $\infty$ is 
\[k = \frac{\lambda_1 + \cdots + \lambda_d}{\lambda_1}.\]
Since this structure is locally quasi-conformally equivalent to the structure given by visual metrics, the conformal dimension of $\partial G_A$ is also $k$.    
\end{remark}

\begin{remark}
The critical exponent for the $L^p$-cohomology of $G_A$ in degree $1$ is also equal to $k$ (see \cite[Theorem 4]{Pan88}) and is a quasi-isometry invariant, as the conformal dimension. 
\end{remark}


\bigskip
\noindent
{\bf Overview of the proof :}  
 The idea is to approximate a horosphere $H$ of $M$ by a simpler subset $\mathcal H$, defined as the union of \(d\) smooth submanifolds pasted along their boundary. Each piece of the approximate horosphere $\mathcal H$ is defined by an equation, and its volume growth can be estimated (see Proposition \ref{approxgrowthlemma}). 
 In order to compare the estimate of the volume growth of $H$ and the one of $\mathcal H$,  
 we rely on the work of Coulhon and Saloff-Coste, which states that two metric measured spaces have volume growth of the same order if they are locally doubling and \emph{isometric at infinity}. Being isometric at infinity means being quasi-isometric and also satisfying some local geometric homogeneity. The geometric homogeneity properties of the approximate horosphere $\mathcal H$ and the actual horosphere $H$ come down to the fact that the volume of balls of radius $r$
 are bounded from above and below independently of their centers (see Corollary \ref{corollarycontvol} and Lemma \ref{cont-vol-H}). The fact that the horosphere $H$ and the approximate horosphere $\mathcal H$ are quasi-isometric relies on their convexity and projections arguments (see Proposition \ref{quasi-isometry-prop}). 
 
\bigskip
\noindent
{\bf Acknowledgement:} We thank Gabriel Pallier for his interest and useful comments. 
 
 \section{Geometric preliminaries}
 
 In what follows we fix \(A\) as above and let \(M = G_A\). The points in $M$ will be written in coordinates $(y,x)=(y,x_1,\ldots,x_d)$. 
 
 The curves $t\mapsto\gamma_x(t)=(t,x)\in M$ are all geodesic that are asymptotic to the future and defines different points to the past (we call them \textit{vertical geodesics}). 
We then get a complete description of $\partial M \approx \mathbb{S}^d = \mathbb R ^d \cup \{\infty \}$: the point $\infty$ is $\gamma_x(+\infty)$ for any $x\in \R^d$, and each point $x\in \R^d$ represents $\gamma_x(-\infty) \subset \partial M$.

\smallskip
In the whole paper, we will denote by $\xi _{+}$ the point $\infty$ and by $\xi _{-}$ the point $\gamma_0(-\infty)$.  We will write \(\dist(p,q)\) for the Riemmanian distance between $p$ and $q$ in $M$ and use \(\dist_X\) for any induced distance on a Lipschitz submanifold \(X\). 

\smallskip

We fix the base point \(o = (0,0)\) and recall that the Busemann function associated to a point \(\xi \in \partial M\) is defined by
\[b_{\xi}(p) = \lim\limits_{s \to +\infty}\dist(\alpha(s),p) - s,\]
where \(\alpha\) is the geodesic ray from \(o\) to \(\xi\).
Noting that 
\[\dist\bigl((s,x),(s,0)\bigr) \le e^{-\lambda_1 s} \left(\sum\limits_{i = 1}^d x_i^2\right)^{\frac{1}{2}},\]
we obtain $b_{\xi_+}(y,x) = -y.$ Hence the horospheres associated to the point \(\xi_+\) are the sets 
$$
H_{\xi_{+}} (t)=\lbrace t \rbrace \times \R^d,
$$
for $t\in \mathbb R$. It is immediate that they are isometric to \(\R^d\) with the standard Euclidean metric via the isometry \((t,x) \mapsto e^{-tA}x\), i.e., 
the induced metric on $H_{\xi_{+}} (t)$ writes
$$h_t := \sum _{i =1} ^{d} e^{-2\lambda _i t} dx_i ^2.$$ 
We call them \textit{horizontal horospheres} or \textit{Euclidean horospheres}.

The set $\{ \tau_\lambda : \lambda \in \R\}$ of transformations of $M$ defined by 
$$\tau_\lambda (t,x) = \bigl(t+ \lambda, e^{2\lambda A}x\bigr)$$ 
is a one parameter group of isometries of $M$, which acts transitively on the family of horizontal horospheres. On the other hand, the set of translations 
$\{T_z : z\in \R^d\}$ defined by 
$$T_z (t, x) := (t, x + z)$$ 
forms a group of isometries of $M$ which fix $\xi_{+}$ and globally preserves $H_{\xi_{+}} (t)$ for every $t\in \mathbb R$. Moreover, it acts transitively on $\partial M \setminus \{\xi_{+}\}$. Using successively elements $\tau_\lambda$ and $T_z$, one then deduce that all horospheres centered at points $x\in \R^d \subset \partial M$ belong to the same class of isometry. Since the group $\mathbb G_A$ generated by $\{ g_\lambda  : \lambda \in \R\}$ and $\{T_z : z\in \R^d\}$
acts isometrically and simply transitively on $M$, so that $M$ is isometric to $\mathbb G_A$ endowed with a left invariant metric. We add that when $\lambda _1 < \lambda _d$, every isometry of $M$ fixes the point $\xi _+$, cf. Remark \ref{rmk2}.  We sum up the above discussion in the following proposition.

\begin{proposition}\label{two-isom-class}
There are two orbits of the action of $\mathbb G _A$ on the set of horospheres of \(M\), i.e., the set of Euclidean horospheres centered at $\xi _+$, and the set of horospheres centered at $x\in \mathbb R^d$. 
\end{proposition}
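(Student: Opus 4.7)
The plan is to identify the two orbits by analyzing how the generators of $\mathbb{G}_A$ act on the ideal boundary $\partial M = \mathbb{R}^d \cup \{\xi_+\}$. Much of the groundwork has already been laid in the preceding discussion; the remaining task is to verify cleanly that the horizontal horospheres form one orbit, that all non-horizontal horospheres form a single orbit, and that these two orbits are distinct.

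The first step is to observe that every generator of $\mathbb{G}_A$ fixes $\xi_+$. For $T_z$ this is immediate, since $T_z$ sends the vertical geodesic $\gamma_x$ to $\gamma_{x+z}$, both converging to $\xi_+$. For $\tau_\lambda$, one checks directly that $\tau_\lambda$ maps each vertical geodesic to a reparametrization of another vertical geodesic, which also ends at $\xi_+$. It follows that the $\mathbb{G}_A$-orbit of a horizontal horosphere consists only of horospheres centered at $\xi_+$, and that no non-horizontal horosphere can lie in this orbit. Transitivity on the horizontal family is then immediate from the explicit formula $\tau_\lambda(\{t\} \times \mathbb{R}^d) = \{t+\lambda\} \times \mathbb{R}^d$, which already appears in the preceding text.

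For horospheres centered at points of $\mathbb{R}^d$, I would proceed in two steps. First, $T_z$ acts on $\partial M \setminus \{\xi_+\} = \mathbb{R}^d$ as the ordinary Euclidean translation by $z$, so the translations act transitively on the set of possible centers. Second, since $\tau_\lambda$ preserves the geodesic $\gamma_0(t) = (t,0)$, it fixes $\xi_- = \gamma_0(-\infty) = 0$; moreover it moves the horosphere through $o=(0,0)$ centered at $\xi_-$ to the horosphere through $(\lambda, 0)$ centered at $\xi_-$, so the subgroup $\{\tau_\lambda\}$ acts transitively on the one-parameter family of horospheres centered at $0$. Combining these, given any two horospheres $H_1, H_2$ with centers $x_1, x_2 \in \mathbb{R}^d$, I would apply $T_{-x_1}$, then an appropriate $\tau_\lambda$ matching heights, then $T_{x_2}$, to send $H_1$ to $H_2$.

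The point that requires the most care is tracking how $\tau_\lambda$ and $T_z$ act on the boundary: although $\mathbb{G}_A$ already acts simply transitively on $M$, transitivity on the $(d+1)$-dimensional space of horospheres is not automatic, and it hinges on the stabilizer of $\xi_-$ in $\mathbb{G}_A$ still containing the nontrivial subgroup $\{\tau_\lambda\}$. Once these boundary actions are pinned down, the proposition follows with no further analytic input, and the two orbits are distinguished simply by whether the common fixed point $\xi_+$ is the center of the horosphere or not.
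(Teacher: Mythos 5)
Your argument is correct and follows essentially the same route as the paper, which proves this proposition via the discussion immediately preceding it: $\tau_\lambda$ gives transitivity on horizontal horospheres and on the one-parameter family centered at a fixed point of $\mathbb{R}^d\subset\partial M$, $T_z$ gives transitivity on $\partial M\setminus\{\xi_+\}$, and the two orbits are distinguished because every element of $\mathbb{G}_A$ fixes $\xi_+$. Your explicit composition $T_{x_2}\circ\tau_\lambda\circ T_{-x_1}$ just makes the paper's phrase ``using successively elements $\tau_\lambda$ and $T_z$'' concrete.
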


We end these preliminaries with a description of the Levi-Civita connection of $M$ which will be useful later. We set $Y:= \partial _y$ and $X_i := \partial _{x_i}$ the vector fields associated to coordinates $(y,x_1,\ldots,x_d)\in M=\mathbb R \ltimes_A \mathbb R^d$.

\begin{lemma}\label{levi-civita}
Let $\nabla$ be the Levi-Civita connection of the metric 
$$g= dy^2 + \sum _{i=1}^{d} e^{-2\lambda _i y} dx_i ^2$$ on $M$. Then, at every point $(y,x) \in M$
\begin{enumerate}
\item[(i)]
$\nabla _Y Y =0$
    \item[(ii)] 
    $\nabla _Y X_i = 
    \nabla _{X_i} Y= -\lambda _i X_i$

    \item[(iii)] 
    $\nabla _{X_i}X_j =0$ for $i\neq j$

    \item[(iv)] 
    $\nabla _{X_i} X_i = \lambda _i e^{-2\lambda _i y} Y.$
\end{enumerate} 
\end{lemma}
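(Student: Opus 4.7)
The plan is to apply the Koszul formula to the coordinate vector fields \(Y, X_1, \ldots, X_d\). Since these are all coordinate fields, their pairwise Lie brackets vanish, and the Koszul formula reduces to
\[2g(\nabla_U V, W) = U g(V,W) + V g(U,W) - W g(U,V)\]
for any \(U, V, W \in \{Y, X_1, \ldots, X_d\}\).

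I would first record the metric coefficients in these coordinates: \(g(Y,Y) = 1\), \(g(X_i,X_i) = e^{-2\lambda_i y}\), and all mixed products vanish. The only partial derivatives that are nonzero are \(Y(g(X_i,X_i)) = -2\lambda_i e^{-2\lambda_i y}\); in particular, \(X_j\) annihilates every metric coefficient, and \(Y\) annihilates \(g(Y,Y)\). Since \(\{Y, X_1, \ldots, X_d\}\) forms an orthogonal frame, each covariant derivative is determined by testing against each member of the frame and dividing by the appropriate squared norm.

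For (i), taking \((U,V) = (Y,Y)\) and \(W \in \{Y, X_i\}\), every term on the right of the Koszul formula involves either \(Y\) or \(X_i\) applied to \(g(Y,Y) = 1\) or to a mixed coefficient, all of which are zero; hence \(\nabla_Y Y = 0\). For (ii), taking \((U,V) = (Y, X_i)\) and \(W = X_j\) with \(j \ne i\), all three terms again involve derivatives of zero or constant coefficients, so \(g(\nabla_Y X_i, X_j) = 0\); and \(g(\nabla_Y X_i, Y) = 0\) by the same reasoning. Testing with \(W = X_i\) gives
\[2g(\nabla_Y X_i, X_i) = Y(g(X_i, X_i)) = -2\lambda_i e^{-2\lambda_i y},\]
so \(g(\nabla_Y X_i, X_i) = -\lambda_i g(X_i, X_i)\), yielding \(\nabla_Y X_i = -\lambda_i X_i\). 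The identity \(\nabla_{X_i} Y = \nabla_Y X_i\) then follows because \([Y, X_i] = 0\) and the connection is torsion-free.

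For (iii) with \(i \ne j\), the Koszul formula applied to \((U,V) = (X_i, X_j)\) gives terms \(X_i g(X_j, W) + X_j g(X_i, W) - W g(X_i, X_j)\); for \(W \in \{Y\} \cup \{X_k\}\) each of these is a derivative of a coefficient that is either identically zero (mixed) or depends only on \(y\) and is differentiated along some \(X_k\), so everything vanishes and \(\nabla_{X_i} X_j = 0\). Finally, for (iv), taking \((U,V) = (X_i, X_i)\) and \(W = X_j\) with any \(j\) produces only \(X_i\)- or \(X_j\)-derivatives of coefficients depending only on \(y\), so the \(X_j\)-components of \(\nabla_{X_i} X_i\) vanish; and taking \(W = Y\) gives
\[2g(\nabla_{X_i} X_i, Y) = -Y(g(X_i, X_i)) = 2\lambda_i e^{-2\lambda_i y},\]
so \(\nabla_{X_i} X_i = \lambda_i e^{-2\lambda_i y} Y\), as claimed. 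There is no real obstacle here; the calculation is entirely mechanical once one observes that the only surviving derivative is \(Y(g(X_i,X_i))\).
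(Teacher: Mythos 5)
Your proof is correct: the Koszul formula with vanishing brackets, applied to the orthogonal coordinate frame \(\{Y,X_1,\dots,X_d\}\), and the observation that the only surviving derivative of a metric coefficient is \(Y(g(X_i,X_i)) = -2\lambda_i e^{-2\lambda_i y}\), do yield all four identities; I checked each case and the signs and components all come out right. The paper organizes the computation differently: it takes as input two geometric facts — that the vertical curves \(y\mapsto(y,x)\) are geodesics (giving (i) directly) and that the slices \(\{x_j = u_j,\ j\neq i\}\) are totally geodesic (which kills the \(X_j\)-components of \(\nabla_Y X_i\) and \(\nabla_{X_i}X_i\) for \(j\neq i\) without computation) — and then pins down the remaining components using metric compatibility, e.g. \(2g(\nabla_Y X_i,X_i) = Y g(X_i,X_i)\), which is in substance the same identity your Koszul formula produces. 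The trade-off is that the paper's route is shorter on paper but leans on those two geometric inputs, which it asserts rather than proves (they follow, e.g., from the slices being fixed-point sets of the isometries \(x_j\mapsto -x_j+2u_j\)); your version is slightly longer but entirely self-contained and mechanical, with nothing left to justify. Either is acceptable; if you keep yours, you might note explicitly that \([U,V]=0\) for coordinate fields is what reduces Koszul to the three-term formula you state, since that is the only structural fact you are using beyond the metric coefficients.
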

\begin{proof}
Since $y \mapsto (y,x)$ is a geodesic, we have $\nabla _Y Y =0$, which prove $(i)$.

Notice that, for every $(u_j)_{j\neq i}$, the set 
$$\bigl\{(y,x) \in M : x_j = u_j\in \mathbb R, \, \forall j\neq i \bigr\}$$
is totally geodesic thus, $\nabla _Y X_i$ and $\nabla _{X_i}X_i$ are perpendicular to $X_j$ for every $j\neq i$, i.e., decomposes along $Y$ and $X_i$. We first observe that $$g\bigl(\nabla _Y X_i,Y\bigr) =- g\bigl(X_i, \nabla _Y Y\bigr) =0.$$
Since $g\bigl(X_i, X_i\bigr) = e^{-2\lambda _i y}$, we have 
\begin{equation}\label{Y-norm-X}
2 g\bigl(\nabla _{Y} X_i, X_i\bigr)=Y.g\bigl(X_i,X_i\bigr) = -2\lambda _i e^{-2\lambda _i y}
\end{equation}
which implies with the previous equality that $\nabla _Y X_i = -\lambda _i X_i$, which proves $(ii)$. 

Since $g\bigl(X_i,X_i\bigr) = e^{-2\lambda _i y}$ does not depend on $x$, we have 
$$2g\bigl(\nabla _{X_i} X_i, X_i\bigr)=X_i \,g\bigl(X_i, X_i\bigr) =0$$
hence, $\nabla _{X_i} X_i = a Y$ for some $a\in \mathbb R$. We have 
$$
g\bigl(\nabla _{X_i} X_i, Y\bigr)= 
-g\bigl(X_i,\nabla _{X_i} Y\bigr)=
-g\bigl(X_i,\nabla _{Y} X_i\bigr)=
\lambda _i e^{-2\lambda _i y},
$$
the last equality coming from (\ref{Y-norm-X}) and therefore, 
$\nabla _{X_i} X_i = \lambda _i e^{-2\lambda _i y} Y$ which proves $(iv)$.

It remains to prove $(iii)$. By $(ii)$ and $(iv)$, we have 
$$g\bigl(\nabla _{X_i}X_j ,Y\bigr) = \lambda _i g\bigl(X_j,X_i\bigr) =0 $$
and 
$$
g\bigl(\nabla _{X_i}X_j, X_i\bigr) = 
-\lambda _i e^{-2\lambda _i y}g\bigl(X_j, Y\bigr) =0.
$$
We also have $g\bigl(\nabla _{X_i}X_j, X_j\bigr)=0$ as $g\bigl(X_j, X_j\bigr) = e^{-2\lambda _i y}$ does not depend on $x$.  This ends the proof of (iii). 
\end{proof}

A direct calculation using the above lemma gives bounds for the curvature of $M$ (see also \cite[Proposition 6 (1)]{Garcia-Lara}). 

\begin{corollary}\label{CorCurvatura}
The sectional curvature of $M$ is bounded between $-\lambda_d^2$ and $-\lambda_1^2$.    
\end{corollary}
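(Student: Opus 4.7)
The plan is to work in the global orthonormal frame $E_0 := Y$, $E_i := e^{\lambda_i y} X_i$ ($1 \le i \le d$) and compute the full curvature operator on basis vectors, then reduce the general sectional curvature to a convex combination of the coordinate sectional curvatures.

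First I would translate Lemma \ref{levi-civita} to the orthonormal frame. A direct calculation using the Leibniz rule and $Y\cdot e^{\lambda_i y} = \lambda_i e^{\lambda_i y}$ gives
\[\nabla_{E_0} E_\alpha = 0,\qquad \nabla_{E_i} E_0 = -\lambda_i E_i,\qquad \nabla_{E_i} E_i = \lambda_i E_0,\qquad \nabla_{E_i} E_j = 0\ \text{for } i\neq j,\ i,j\ge 1.\]
Moreover $[E_0,E_i] = \lambda_i E_i$ and $[E_i,E_j] = 0$ for $i,j \ge 1$.

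Using $R(X,Y)Z = \nabla_X\nabla_Y Z - \nabla_Y\nabla_X Z - \nabla_{[X,Y]} Z$, I would then compute $R(E_\alpha,E_\beta)E_\gamma$ on all basis triples. A routine check shows that the only nonzero components (up to the standard symmetries) are
\[R(E_0,E_i)E_i = -\lambda_i^2\, E_0,\qquad R(E_i,E_j)E_j = -\lambda_i\lambda_j\, E_i\ \text{for } i\neq j,\ i,j\ge 1,\]
so the sectional curvatures of the coordinate planes are $K(E_0,E_i) = -\lambda_i^2$ and $K(E_i,E_j) = -\lambda_i\lambda_j$.

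For a general tangent $2$-plane, I would pick an orthonormal basis $U = \sum_\alpha u_\alpha E_\alpha$, $V = \sum_\alpha v_\alpha E_\alpha$ and expand $\langle R(U,V)V,U\rangle$. Using the vanishing of all other curvature components together with the antisymmetries, the cross terms collapse and one obtains the clean identity
\[K(U,V) \;=\; -\sum_{\alpha < \beta} \kappa_{\alpha\beta}\,(u_\alpha v_\beta - u_\beta v_\alpha)^2,\]
where $\kappa_{0i} = \lambda_i^2$ and $\kappa_{ij} = \lambda_i\lambda_j$. Lagrange's identity gives $\sum_{\alpha<\beta}(u_\alpha v_\beta - u_\beta v_\alpha)^2 = |U|^2|V|^2 - \langle U,V\rangle^2 = 1$, so $K(U,V)$ is a convex combination of the numbers $-\kappa_{\alpha\beta}$. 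Since every $\kappa_{\alpha\beta}$ is a product of two eigenvalues in $[\lambda_1,\lambda_d]$, it lies in $[\lambda_1^2,\lambda_d^2]$, and the desired bound follows.

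The only genuinely delicate step is the bookkeeping in the curvature computation; the main potential obstacle is confirming that all $R_{\alpha\beta\gamma\delta}$ with $\{\alpha,\beta\}\neq\{\gamma,\delta\}$ vanish, but this is immediate from the frame connection formulas above since any such curvature term reduces to derivatives of zero.
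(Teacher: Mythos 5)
Your proof is correct and is exactly the ``direct calculation using Lemma \ref{levi-civita}'' that the paper invokes (and leaves to the reader, citing Garcia--Lara): the frame computation, the vanishing of the off-diagonal curvature components, and the convex-combination argument via Lagrange's identity all check out. You have simply supplied the details the paper omits, so there is nothing to add.
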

 
\section{Approximating horosphere}


In this section we construct a set $\mathcal H$ approximating the horosphere $H_{\xi_-}(0)$ and compute the order of its volume growth. This set will be defined as a kind of horosphere for a function $\rho:M\times M\to [1,+\infty)$ that approximates  the Riemannian distance of $M$ at large scale.

\subsection{Approximate distance}

Let \(\rho:M \times M \to [1,+\infty)\) be defined by
\[\rho\left((y,x),(y',x')\right) = 1 + |y-\hat{y}| + 2\inf\left\lbrace t \ge 0: \sum\limits_{i = 1}^d e^{-2\lambda_i (t+\max(y,y'))}(x_i-x'_i)^2 \le 1\right\rbrace.\]

Observe that, if the points $p$ and $q$ are sufficiently far apart, then $\rho(p,q)$ represents the length of the curve from $p$ to $q$. This curve goes through the vertical line to a point $\tilde{p} = (\tilde{y}, x)$, then follows the geodesic in $H_{\xi_+}(\tilde{y})$ from $\tilde{p}$ to a point $\tilde{q} = (\tilde{y}, \hat{y})$, and then continues along the vertical line from $\tilde{q}$ to $q$, where the distance in $H_{\xi_+}(\tilde{y})$ between $\tilde{p}$ and $\tilde{q}$ is $1$. This curve is intended to approximate the geodesic segment between $p$ and $q$.

\begin{proposition}\label{approximatedistance}
There exists \(C_\rho > 0\) such that
\[\dist(p,q) \le \rho(p,q) \le \dist(p,q) + C_\rho,\]
for all \(p,q \in M\).
\end{proposition}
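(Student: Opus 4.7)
The plan is to prove the two inequalities separately. The upper bound $\dist(p,q) \le \rho(p,q)$ follows by constructing an explicit piecewise path; the lower bound $\rho(p,q) \le \dist(p,q) + C_\rho$ is harder, and I would reduce it to the two-dimensional case by projecting onto the totally geodesic hyperbolic planes singled out by Lemma~\ref{levi-civita}.

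For the upper bound, write $y_M = \max(y, y')$ and let $t^*$ denote the infimum in the definition of $\rho$. For any $t > t^*$, consider the piecewise-smooth path that goes vertically from $p = (y,x)$ to $(y_M + t, x)$, then horizontally across the Euclidean horosphere $H_{\xi_+}(y_M + t)$ to $(y_M + t, x')$, and finally vertically down to $q = (y',x')$. Its length equals $2(y_M+t) - y - y' + \sqrt{\sum_i e^{-2\lambda_i(y_M + t)}(x_i - x'_i)^2}$. Since $2y_M - y - y' = |y-y'|$ and the radicand is at most $1$ for $t > t^*$, this is bounded by $|y-y'| + 2t + 1$. Letting $t \searrow t^*$ gives $\dist(p,q) \le \rho(p,q)$.

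For the lower bound, fix $k \in \{1, \ldots, d\}$. The submanifold $P_k := \{(y,x) \in M : x_j = 0 \text{ for all } j \ne k\}$ is totally geodesic by Lemma~\ref{levi-civita} and, with the induced metric $dy^2 + e^{-2\lambda_k y}\, dx_k^2$, is isometric to the real hyperbolic plane of curvature $-\lambda_k^2$. The coordinate projection $\pi_k: M \to P_k$, $(y,x) \mapsto (y, 0, \ldots, x_k, \ldots, 0)$, is $1$-Lipschitz because its tangent map reduces Riemannian norms pointwise. Consequently $\dist(p,q) \ge \dist_{P_k}(\pi_k(p), \pi_k(q))$, and the latter is given by the explicit two-dimensional hyperbolic distance formula
\[ \cosh\bigl(\lambda_k\, \dist_{P_k}(\pi_k(p), \pi_k(q))\bigr) = \cosh\bigl(\lambda_k(y-y')\bigr) + \frac{\lambda_k^2(x_k - x'_k)^2}{2\, e^{\lambda_k(y+y')}}. \]
Using $\operatorname{arccosh}(z) \ge \ln z$ for $z \ge 1$ and bounding the right-hand side from below by each summand separately, one obtains $\dist_{P_k}(\pi_k(p), \pi_k(q)) \ge |y-y'| + 2 t_k^* - C_k$, where $t_k^* = \max\bigl(0,\, \ln|x_k - x'_k|/\lambda_k - y_M\bigr)$ is the one-dimensional analogue of $t^*$ and $C_k$ depends only on $\lambda_k$. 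A direct comparison using $\lambda_i \ge \lambda_1$ gives $t^* \le \max_k t_k^* + \frac{\ln d}{2\lambda_1}$, so combining these estimates and maximizing over $k$ yields $\rho(p,q) \le \dist(p,q) + C_\rho$ for an explicit constant $C_\rho$.

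The main obstacle is making the two-dimensional estimate uniform across all configurations: the bound $\operatorname{arccosh}(z) \ge \ln z$ is sharp as $z \to \infty$ but becomes loose near $z = 1$, so the regime where $|x_k - x'_k|$ is comparable to $e^{\lambda_k(y+y')/2}/\lambda_k$ must be handled separately, checking that in this range the quantity $1 + |y-y'| + 2t_k^*$ is itself bounded by a constant depending only on $\lambda_k$, which makes the inequality trivial there.
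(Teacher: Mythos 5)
Your argument is correct, but it proves the proposition by a genuinely different route than the paper. The paper obtains the upper bound $\rho(p,q)\le \dist(p,q)+C_\rho$ from Gromov hyperbolicity: thin triangles produce points $\tilde p,\tilde q$ on the vertical rays at equal height with $\dist(p,q)=\dist(p,\tilde p)+\dist(q,\tilde q)$ and $\dist(\tilde p,\tilde q)\le\delta$, and the Heintze--Im Hof comparison between intrinsic horospherical distance and ambient distance then controls how far one must descend before the horizontal gap reaches $1$. You instead exploit the diagonal product structure directly: the coordinate projections $\pi_k$ onto the totally geodesic planes $P_k$ are $1$-Lipschitz (their differentials contract the norm pointwise since the metric coefficients depend only on $y$, and paths project to shorter paths), the induced metric on $P_k$ is a hyperbolic plane of curvature $-\lambda_k^2$ with an explicit distance formula, and the key identity $|y-y'|+2t_k^*=\tfrac{2}{\lambda_k}\ln|x_k-x'_k|-(y+y')$ (valid when $t_k^*>0$) lets a single summand of that formula recover the full sum; the comparison $t^*\le\max_k t_k^*+\tfrac{\ln d}{2\lambda_1}$, which follows from $\sum_i e^{-2\lambda_i s}\le de^{-2\lambda_1 s}$, then assembles the coordinatewise bounds. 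Your approach is more elementary and yields explicit constants, at the cost of being tied to the diagonal structure; the paper's is softer and closer in spirit to arguments that survive in more general pinched-curvature settings.

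One small inaccuracy in your closing paragraph: in the regime where $|x_k-x'_k|$ is comparable to $e^{\lambda_k(y+y')/2}/\lambda_k$, the quantity $1+|y-y'|+2t_k^*$ is \emph{not} bounded by a constant, since $|y-y'|$ can still be arbitrarily large (only $t_k^*$ is bounded there); in that regime the inequality follows from the summand $\cosh\bigl(\lambda_k(y-y')\bigr)$, which gives $d\ge|y-y'|$. In fact the case split is unnecessary altogether if one bounds $e^{\lambda_k d}\ge\cosh(\lambda_k d)\ge \lambda_k^2(x_k-x'_k)^2/\bigl(2e^{\lambda_k(y+y')}\bigr)$ whenever $t_k^*>0$ and uses the first summand whenever $t_k^*=0$; this avoids any delicate behaviour of $\operatorname{arccosh}$ near $1$.
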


Notice that, since $(M,g)$ is Gromov-hyperbolic the following property holds: there exists $\delta>0$ such that for every geodesic triangle $\Delta$ and the map $f_\Delta:\Delta\to T$ to a tripod $T$ that is an isometry on the edges, one has $\diam\bigl(f_\Delta^{-1}(w)\bigr)\leq \delta$ for every $w\in T$.

In order to prove Proposition \ref{approximatedistance} we will prove first the following lemmas, in which we write for convenience \(h(y,x) = y\) for  \((y,x) \in M\).

\begin{lemma}\label{emilemma1}
Let $\delta$ be as above, then for all $p,q\in M$ there exist $\tilde{p},\tilde{q}\in M$ such that
\begin{enumerate}
    \item[i)] $h(\tilde{p})=h(\tilde{q})$.

    \item[ii)] $p$ and $\tilde{p}$ are in the same vertical geodesic. The same for $q$ and $\tilde{q}$.

    \item[iii)] $\dist(p,q)=\dist(p,\tilde{p})+\dist(q,\tilde{q})$.

    \item[iv)] $\dist(\tilde{p},\tilde{q})\leq \delta$.  
\end{enumerate}
\end{lemma}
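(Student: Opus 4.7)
The plan is to take $\tilde{p}$ and $\tilde{q}$ to be the points on the vertical geodesic rays through $p$ and $q$ (both directed toward $\xi_+$) that correspond to the center of the tripod associated to the ideal geodesic triangle with vertices $p$, $q$, and $\xi_+$. Writing $p = (y_p, x_p)$ and $q = (y_q, x_q)$, I would set
\[\alpha := \tfrac{1}{2}\bigl(\dist(p,q) + y_q - y_p\bigr), \qquad \beta := \tfrac{1}{2}\bigl(\dist(p,q) + y_p - y_q\bigr),\]
which are both nonnegative because $\dist(p,q) \ge |y_p - y_q|$ (the metric $g$ contains the term $dy^2$), and then define $\tilde{p} := (y_p + \alpha, x_p)$ and $\tilde{q} := (y_q + \beta, x_q)$. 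Property (ii) holds by construction; (iii) follows from $\dist(p,\tilde{p}) = \alpha$, $\dist(q, \tilde{q}) = \beta$ (since vertical curves are unit-speed geodesics) together with the identity $\alpha + \beta = \dist(p,q)$; and (i) is the direct computation $h(\tilde{p}) = y_p + \alpha = \tfrac{1}{2}(y_p + y_q + \dist(p,q)) = y_q + \beta = h(\tilde{q})$. Intuitively, the heights $\alpha$ and $\beta$ are the Gromov products $(q\mid \xi_+)_p$ and $(p\mid \xi_+)_q$, computed using $b_{\xi_+}(y,x) = -y$.

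The content of the lemma is then (iv), which I would obtain by approximating the ideal triangle $pq\xi_+$ by finite geodesic triangles. For each integer $n > \max(y_p, y_q)$, let $r_n := (n, x_p)$, which lies on the vertical ray from $p$ to $\xi_+$, and apply the $\delta$-thin triangle property to the geodesic triangle with vertices $p$, $q$, $r_n$. Setting
\[\alpha_n := \tfrac{1}{2}\bigl(\dist(p,q) + \dist(p,r_n) - \dist(q,r_n)\bigr), \qquad \beta_n := \dist(p,q) - \alpha_n,\]
the points $\tilde{p}_n \in [p, r_n]$ at distance $\alpha_n$ from $p$ and $\tilde{q}_n \in [q, r_n]$ at distance $\beta_n$ from $q$ are both preimages of the central vertex of the tripod, so $\dist(\tilde{p}_n, \tilde{q}_n) \le \delta$.

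To conclude, I would pass to the limit $n \to \infty$. From the definition of the Busemann function and the formula $b_{\xi_+}(y,x) = -y$ recalled in the preliminaries, $\dist(p, r_n) = n - y_p$ and $\dist(q, r_n) - n \to -y_q$, hence $\alpha_n \to \alpha$ and $\beta_n \to \beta$. Since $[p, r_n]$ is already the initial segment of the vertical ray from $p$, one obtains $\tilde{p}_n \to \tilde{p}$ at once. For the other point, the segments $[q, r_n]$ converge locally uniformly (as $r_n \to \xi_+$) to the vertical geodesic ray from $q$ to $\xi_+$, by uniqueness of geodesic rays in the simply connected negatively curved manifold $M$, and therefore $\tilde{q}_n \to \tilde{q}$. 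Continuity of the distance then yields $\dist(\tilde{p}, \tilde{q}) \le \delta$, proving (iv). The only delicate step is this last limit argument, where one must be careful that the Gromov-hyperbolicity constant $\delta$ chosen for finite triangles transfers to the ideal case; everything else is bookkeeping on Gromov products.
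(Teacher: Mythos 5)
Your proof is correct and follows essentially the same strategy as the paper: approximate the ideal triangle $pq\xi_+$ by finite geodesic triangles, apply the $\delta$-thin tripod property, and pass to the limit. The only difference is cosmetic — the paper obtains $\tilde{p},\tilde{q}$ as subsequential limits of the tripod-center preimages (which lie in a fixed ball) and then verifies (i)--(iv), whereas you identify them explicitly in coordinates via the Gromov products $(q\mid\xi_+)_p$ and $(p\mid\xi_+)_q$ using $b_{\xi_+}(y,x)=-y$, and prove full convergence of the tripod centers to these points via local uniform convergence of the segments $[q,r_n]$ to the vertical ray.
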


\begin{proof}Parameterize the geodesic ray $[q,\xi_+)$ by $\beta:[0,\xi_+)\to M$
and consider the geodesic triangle
$$\Delta(t)=[p,q]\cup [p,\beta(t)]\cup [q,\beta(t)].$$
Moreover, for each $t\geq 0$ consider $f_\Delta$ the corresponding map to a tripod $T(t)$ and the points $a(t)\in [p,\beta(t)]$, $b(t)\in [q,\beta(t)]$ and $c(t)\in [p,q]$ the preimages of the center of $T(t)$. Recall that $$\diam \{a(t),b(t),c(t)\}\leq \delta.$$

Observe that $a(t)$ and $b(t)$ belong to the ball of center $q$ and radius $2\dist(p,q)$, therefore there exists a sequence $t_n\to+\infty$ such that $a_n=a(t_n)\to \tilde{p}$ and $b_n=b(t_n)\to \tilde{q}$. Let us prove that these points satisfy conditions $(i)-(iv)$:
\begin{enumerate}
    \item[\textit{i)}] Since $a_n$ and $b_n$ are preimages of the center of $T(t_n)$, the sequence $$\dist(p,\beta(t_n))-\dist(q,\beta(t_n))=\dist(p,a_n)-\dist(q,b_n)$$ converges to $$\dist(p,\tilde{p})-\dist(q,\tilde{q}).$$
We therefore have $\dist(p,\tilde{p})-\dist(q,\tilde{q})= b_{\xi_+}(p) - b_{\xi_+}(q)$, and hence $\tilde{p}$ and $\tilde{q}$ are on the same horosphere centred at $\xi_+$, that is, $h(\tilde{p})=h(\tilde{q})$.

    \item[\textit{ii)}] It is clear that $\tilde{q}$ are in $[q,\xi_+)$. Moreover, since the edge $[p,\beta(t_n)]$ tends to $[p,\xi_+)$, then $\tilde{p}\in [p,\xi_+)$.

    \item[\textit{iii)}] Since $\dist(p,q)=\dist(p,c_n)+\dist(q,c_n)=\dist(p,a_n)+\dist(q,b_n),$ 
    we get, taking limit,
    $$\dist (p,q)= \dist(p,\tilde{p})+\dist(q,\tilde{q}).$$

    \item[\textit{iv)}] It is clear that $\dist(a_n,b_n)\leq \delta$ and thus $\dist(\tilde{p},\tilde{q})\leq \delta$.
\end{enumerate}
\end{proof}

Given $p=(y,x)\in M$, the Riemannian norm of $(v,u) =(v,u_1,\ldots,u_n) \in T_p M \simeq \mathbb R \times \mathbb R^d$ is
$$\|(v,u)\|=\|(v,u)\|_p = \bigl(v^2 +|u|^2_y \bigr)^{\frac{1}{2}},$$
where
$$|u|_y:= \left(\sum\limits_{i = 1}^d e^{-2 \lambda_i y}u_i ^2\right)^{\frac{1}{2}}.$$
For points $p = (y, x)$ and $\hat p = (y, \hat x)$ we write $|p-\hat{p}|_y=|x-\hat{x}|_y$. Observe that $|p-\hat{p}|_y=\dist_{H_{\xi_+(y)}}(p,\hat{p})$.

\begin{lemma}\label{emilemma2}
Let $p,p',q,q'\in M$ such that:
\begin{itemize}
    \item $p'\in [p,\xi_+)$ and $q'\in [q,\xi_+)$.

    \item $h(p')=h(q')=y$.

    \item $|p'-q'|_{y}=1$.
\end{itemize}
Then $\dist(p,p')+\dist(q,q')\leq \dist(p,q)+C$, where $C$ is a constant that does not depend on the points.
\end{lemma}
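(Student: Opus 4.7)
The plan is to invoke Lemma \ref{emilemma1} on the pair $p, q$ to obtain points $\tilde p, \tilde q$ lying on the vertical rays from $p$ and $q$ respectively, at a common height $\tilde y := h(\tilde p) = h(\tilde q)$, with $\dist(p,q) = \dist(p,\tilde p) + \dist(q,\tilde q)$ and $\dist(\tilde p, \tilde q) \leq \delta$. Since $p, p', \tilde p$ all lie on the vertical geodesic from $p$ to $\xi_+$ (on which the Riemannian distance equals the difference of $y$-coordinates), and similarly for the $q$-triple, a direct bookkeeping gives
\[\dist(p,p') + \dist(q,q') - \dist(p,q) = 2(y - \tilde y).\]
The statement therefore reduces to producing a universal upper bound on $y - \tilde y$; the case $y \leq \tilde y$ is trivial, so I would focus on $y > \tilde y$.

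The first key step would be to bound the horospheric quantity $|\tilde p - \tilde q|_{\tilde y}$ by a constant $K = K(\delta)$. Using the isometry $\tau_{-\tilde y}$ composed with an appropriate horospheric translation $T_z$, I would reduce to the case $\tilde p = o$ and $\tilde y = 0$; then $\tilde q$ lies in the closed ambient ball of radius $\delta$ around $o$, whose intersection with the flat Euclidean horosphere $H_{\xi_+}(0)$ is compact. The Euclidean diameter of this compact intersection supplies the desired $K$.

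For the second step, write $\tilde p = (\tilde y, a)$ and $\tilde q = (\tilde y, b)$ in coordinates; then $p' = (y,a)$ and $q' = (y,b)$, so
\[|p' - q'|_y^2 = \sum_{i=1}^d e^{-2\lambda_i(y - \tilde y)}\, e^{-2\lambda_i \tilde y}(a_i - b_i)^2 \leq e^{-2\lambda_1(y - \tilde y)}\, |\tilde p - \tilde q|_{\tilde y}^2,\]
using $\lambda_1 = \min_i \lambda_i$ and $y > \tilde y$. Combining this with the normalization $|p' - q'|_y = 1$ and the bound $|\tilde p - \tilde q|_{\tilde y} \leq K$ forces $e^{\lambda_1(y - \tilde y)} \leq K$, hence $y - \tilde y \leq \log(K)/\lambda_1$; taking $C := 2\log(K)/\lambda_1$ finishes the argument.

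The main obstacle I anticipate is ensuring that the constant $K(\delta)$ is genuinely independent of the initial pair $p, q$; this is precisely where the simply transitive action of $\mathbb{G}_A$ by isometries on $M$ enters, allowing the compactness argument to be performed once and for all at the origin.
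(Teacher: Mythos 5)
Your proof is correct and follows essentially the same route as the paper's: apply Lemma \ref{emilemma1}, reduce to bounding the vertical gap $s = y - \tilde y$, and combine the normalization $|p'-q'|_y = 1$ with the fact that raising the height by $s$ contracts horizontal horospherical distances by at least $e^{-\lambda_1 s}$. The only (harmless) difference is that you bound $|\tilde p - \tilde q|_{\tilde y}$ via homogeneity plus compactness, whereas the paper obtains an explicit constant from the Heintze--Im Hof comparison \eqref{HIF} between intrinsic and ambient distances on a horosphere.
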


\begin{proof}
Take $\tilde{p}$ and $\tilde{q}$ as in Lemma \ref{emilemma1}. If $h(\tilde{p})\geq h(p')$, then
$$\dist(p,q)=\dist(p,\tilde{p})+\dist(q,\tilde{q})\geq \dist(p,p')+\dist(q,q').$$
To solve the other case we look for a bound for $s=\dist(\tilde{p},p')=\dist(\tilde{q},q')>0$, then we will have
$$\dist(p,p')+\dist(q,q')\leq \dist(p,\tilde{p})+\dist(q,\tilde{q})+2s=\dist(p,q)+2s.$$

Theorem 4.6 in \cite{1977heintze} says that, if $\bar{p}$ and $\bar{q}$ belong to the same horosphere $H$ of $M$, then
\begin{equation}\label{HIF}
\frac{2}{\lambda_1}\sinh\left(\frac{\lambda_1}{2}\dist(\bar{p},\bar{q})\right)\leq \dist_H(\bar{p},\bar{q})\leq \frac{2}{\lambda_d}\sinh\left(\frac{\lambda_d}{2}\dist(\bar{p},\bar{q})\right).
\end{equation}
Remember that the sectional curvature of $M$ is bounded between $-\lambda_d$ and $-\lambda_1$.

Using \eqref{HIF} and that the traslation $(y,x)\mapsto(y+s,x)$ has operator norm $e^{-\lambda_1s}$ we obtain 
$$1\leq e^{-\lambda_1s}|\tilde{p}-\tilde{q}|_{\tilde{y}}\leq e^{-\lambda_1s}\frac{2}{\lambda_d}\sinh\left(\frac{\lambda_d}{2}\delta\right),$$
and hence,
$$s\leq \frac{1}{\lambda_1}\log\left(\frac{2}{\lambda_d}\sinh\left(\frac{\lambda_d}{2}\delta\right)\right).$$

\end{proof}

\begin{proof}[Proof of Proposition \ref{approximatedistance}]
The first inequality direct because $\rho(p,q)$ is, at least, the length of a curve joining $p$ and $q$. We need to prove the other one.

Suppose (without loss of generality) that
$h(p)\leq h(q)$. We set $y:= h(q)$ and $\bar p$ the intersection of $H_{\xi_+}(y)$ and the geodesic ray $[p,\infty)$.

We distinguish two cases: 
\begin{enumerate}
    \item $|q-\bar p|_y\geq 1$: since the vertical translation decreases the distance on the horizontal horospheres, there are points $p'\in [p,\xi_+)$ and $q'\in [q,\xi_+)$ satisfying the assumptions of Lemma \ref{emilemma2} and therefore 
    $$\rho(p,q) = d(p, p') + d(q,q')) +1 \leq d(p, q) +C +1.$$
    \item $|q-\bar p|_y\leq 1$: in this case we have 
    $\rho(p,q)=\dist(p,\bar p)+1$; hence, by the triangle inequality, we get  
    $\rho(p,q)=\dist(p,\bar p)+1\leq \dist(p,q)+2$.
\end{enumerate}
\end{proof}

\subsection{Approximate horosphere}
The approximate horosphere we are looking for will be obtained as the boundary of an approximate horoball defined by 
\begin{equation*}\label{setV}
V = \left\lbrace (y,x) \in M:  \max\limits_{1 \le i \le d} e^{-\frac{\lambda_i y}{2}}|x_i| \le 1\right\rbrace.
\end{equation*}
Then we set $\mathcal{H} := \partial \bigl(V \cap \{y\leq 0\}\bigr).$

Observe that \(V \cap \lbrace y \le 0\rbrace\) is the union over  \(t > 0\) of the balls centered at \((-t,0)\) of radius \(t\) for the approximate distance \(\rho\). 

In the following Lemma, we show that the set $V\cap \{y\leq 0\}$ lies in between two nearby horoballs of $M$ and consequently, the approximate horosphere $\mathcal H$ lies between two nearby horospheres. 

\begin{lemma}\label{approxhorolemma}
 There exists a constant \(C_V > 0\) such that
   \[HB_{\xi_-}(-C_V) \subset V \cap \lbrace y \le 0\rbrace \subset HB_{\xi_-}(C_V).\]  
 In particular, the approximate horosphere $\mathcal H$ lies between the two horospheres $H_{\xi_-}(-C_V)$ and 
$H_{\xi_-}(C_V)$.
\end{lemma}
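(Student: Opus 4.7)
The plan is to leverage the approximate distance $\rho$ from Proposition \ref{approximatedistance} to produce a good approximation of the Busemann function $b_{\xi_-}$, and then compare this approximation with the defining inequalities of $V \cap \{y \le 0\}$. Since the geodesic ray from $o$ to $\xi_-$ is $t \mapsto (-t, 0)$, for any $p = (y, x)$ and every $t$ large enough the quantity $\rho((-t,0), p) - t$ is independent of $t$ and equals
\[\tilde{b}(y, x) := 1 + y + 2\, s^*(y, x), \qquad s^*(y, x) := \inf\Bigl\{s \ge 0 : \sum_{i=1}^{d} e^{-2\lambda_i(s + y)} x_i^2 \le 1\Bigr\}.\]
Letting $t \to +\infty$ and using $|\dist - \rho| \le C_\rho$ gives $|b_{\xi_-} - \tilde{b}| \le C_\rho$ pointwise on $M$. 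Hence both inclusions of the lemma will follow as soon as we exhibit a constant $A$ such that $\tilde{b} \le A$ on $V \cap \{y \le 0\}$ and $\tilde{b} \ge -A$ on its complement; it will then suffice to take $C_V := A + C_\rho$.

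For the upper bound on $V \cap \{y \le 0\}$, set $s_0 := (\log d)/(2\lambda_1) - y/2$, which is non-negative since $y \le 0$. Using the defining inequality $x_i^2 \le e^{\lambda_i y}$ of $V$ and the fact that $\lambda_i \ge \lambda_1$, a direct substitution shows
\[\sum_{i=1}^d e^{-2\lambda_i(s_0 + y)} x_i^2 \le \sum_{i=1}^d d^{-\lambda_i/\lambda_1} \le d \cdot d^{-1} = 1.\]
Hence $s^*(y, x) \le s_0$, and plugging back into the formula for $\tilde b$ gives $\tilde{b}(y, x) \le 1 + \log(d)/\lambda_1$.

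For the lower bound on the complement, the case $y > 0$ is immediate since $s^* \ge 0$ forces $\tilde{b}(y, x) \ge 1 + y > 1$. In the remaining case $y \le 0$ with $(y, x) \notin V$, some coordinate satisfies $|x_{i_0}| > e^{\lambda_{i_0} y/2}$; this ensures that $F(0) > 1$ where $F(s)$ is the sum defining $s^*$, so $F(s^*) = 1$ and in particular the $i_0$-term is at most $1$, giving $|x_{i_0}| \le e^{\lambda_{i_0}(s^*+y)}$. Combining with the strict lower bound on $|x_{i_0}|$ yields $s^* > -y/2$, hence $\tilde{b}(y, x) > 1$.

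Taking $C_V := 1 + \log(d)/\lambda_1 + C_\rho$ then yields both inclusions. The concluding statement about $\mathcal{H}$ lying between $H_{\xi_-}(-C_V)$ and $H_{\xi_-}(C_V)$ follows by continuity: every $p \in \mathcal{H} = \partial(V \cap \{y \le 0\})$ is a limit of points in $V \cap \{y \le 0\} \subset HB_{\xi_-}(C_V)$ and simultaneously a limit of points in the complement, which is disjoint from the open horoball $\{b_{\xi_-} < -C_V\}$. I do not anticipate any serious obstacle; the main subtlety is just verifying the explicit formula for $\tilde{b}$ from the definition of $\rho$ and carefully bookkeeping the $y > 0$ versus $y \le 0$ branches in the lower-bound step.
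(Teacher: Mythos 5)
Your proposal is correct and follows essentially the same route as the paper: both approximate $b_{\xi_-}$ by $1+y+2R(y,x)$ up to the additive constant $C_\rho$ via Proposition \ref{approximatedistance}, and both test the infimum at the same value $t = \log(d)/(2\lambda_1) - y/2$ for the second inclusion. The only cosmetic difference is that you prove the first inclusion by contrapositive (a point outside $V\cap\{y\le 0\}$ has $s^* > -y/2$) where the paper argues directly, which amounts to the same computation.
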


\begin{proof}
  Let \(\rho\) and the constant \(C_\rho\) be given by Proposition \ref{approximatedistance}.

  We first observe that for every $(y,x) \in M$,
  \[\lim\limits_{s \to +\infty}\rho\bigl((y,x),(-s,0)\bigr) - s = 1+ y + 2 R(y,x),\]
  where 
  \[R(y,x) = \inf\left\lbrace t \ge 0: \sum\limits_{i = 1}^d e^{-2\lambda_i (t+y)}x_i^2 \le 1\right\rbrace.\]
 
  This implies, by Proposition \ref{approximatedistance}, that
  \begin{equation}\label{approxbusemann}
  -C_\rho + 1 + y + 2R(y,x) \le b_{\xi_-}(y,x) \le C_\rho + 1+ y + 2R(y,x).
  \end{equation}
Assume first that \(b_{\xi_-}(y,x) \le 1 - C_\rho\).  Inequality \eqref{approxbusemann} then implies
  \[y + 2R(y,x) \le 0,\]
and, since \(R(y,x) \ge 0\), we get \(y \le 0\) and
  \[R(y,x) \le -\frac{1}{2}y.\]
By definition of \(R(y,x)\), setting \(t = -\frac{1}{2}y\)  we obtain
\[\sum\limits_{i = 1}^d e^{-2\lambda_i(t + y)}x_i^2 = \sum\limits_{i = 1}^d e^{-\lambda_iy}x_i^2 \le 1,\]
hence
\[\max\limits_{1 \le i \le d}e^{-\lambda_i y/2}|x_i| \le \left(\sum\limits_{i = 1}^d e^{-\lambda_iy}x_i^2\right)^{\frac{1}{2}} \le 1,\]
and therefore
\[HB_{\xi_-}(1-C_\rho) \subset \lbrace y \le 0\rbrace \cap V\]

Assume now that \((y,x) \in \lbrace y \le 0\rbrace \cap V\) and notice that this implies
\[\max\limits_{1 \le i \le d}e^{-\lambda_i y /2}|x_i| \le 1,\]
choosing \(t > 0\) so that \(e^{-\lambda_1(2t + y)} = \frac{1}{d}\) we have
\begin{align*}\sum\limits_{i = 1}^d e^{-2\lambda_i (t+y)}x_i^2  & =  \sum\limits_{i = 1}^d e^{-\lambda_i (2t+y)}\left(e^{-\lambda_i y/2}|x_i|\right)^2  
\\ &= \sum\limits_{i = 1}^d e^{-\lambda_i (2t+y)} \le 1.
\end{align*}

This shows that \(y + 2R(y,x) \le \lambda_1^{-1}\log(d)\), so by \eqref{approxbusemann} we obtain
\[\lbrace y \le 0\rbrace \cap V \subset HB_{\xi_-}\bigl(1+ C_\rho + \lambda_1^{-1}\log(d)\bigr).\]
This concludes the proof choosing 
$$C_V := \max \left\{ C_\rho -1, C_\rho +1 + \frac{\log d}{\lambda _1}\right\}.$$
\end{proof}

The following lemma will be important to prove that the approximate horosphere is quasi-isometric to the actual horosphere of $M$.
\begin{lemma}\label{vconvexitylemma}
The set \(V\) is convex.
\end{lemma}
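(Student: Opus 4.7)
The plan is to write $V$ as a finite intersection of geodesically convex sets. Specifically, I would note the decomposition
\[V = \bigcap_{i=1}^d V_i, \qquad V_i := \{(y,x) \in M : F_i(y,x) \le 1\}, \qquad F_i(y,x) := e^{-\lambda_i y}\, x_i^2.\]
Since an intersection of geodesically convex subsets of a Riemannian manifold is again geodesically convex, it suffices to show each $V_i$ is convex, and for that it is enough to prove that the smooth function $F_i\colon M \to \R$ is \emph{geodesically convex}, i.e., that $\operatorname{Hess}(F_i)$ is positive semidefinite everywhere; then $V_i = F_i^{-1}((-\infty,1])$ is automatically convex because $F_i \circ \gamma$ is convex for every geodesic $\gamma$, and hence bounded above by the values at the endpoints.

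The main task is therefore to compute $\operatorname{Hess}(F_i)$ in the (non-orthonormal) frame $\{Y, X_1, \dots, X_d\}$, using the identity $\operatorname{Hess}(F_i)(U,W) = U(WF_i) - (\nabla_U W)(F_i)$ together with the Levi-Civita formulas of Lemma \ref{levi-civita}. The situation is very favourable: $F_i$ depends only on $y$ and $x_i$, so the only nonzero first derivatives are $Y(F_i)=-\lambda_i F_i$ and $X_i(F_i)=2e^{-\lambda_i y}x_i$, while the only nonvanishing covariant derivatives mixing frame vectors are $\nabla_Y X_j = \nabla_{X_j} Y = -\lambda_j X_j$ and $\nabla_{X_j} X_j = \lambda_j e^{-2\lambda_j y} Y$.

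I expect $\operatorname{Hess}(F_i)$ to come out diagonal in this frame with nonnegative entries. The off-diagonal entries should vanish: $\operatorname{Hess}(F_i)(Y,X_i) = 0$ by an exact cancellation between $YX_i(F_i) = -2\lambda_i e^{-\lambda_i y}x_i$ and $(\nabla_Y X_i)(F_i) = -\lambda_i X_i(F_i)$, and the remaining mixed entries vanish because $X_j F_i = 0$ for $j \neq i$ and $\nabla_{X_j} X_k = 0$ for $j \neq k$. The diagonal entries should be $\operatorname{Hess}(F_i)(Y,Y) = \lambda_i^2 F_i$, $\operatorname{Hess}(F_i)(X_i, X_i) = 2e^{-\lambda_i y} + \lambda_i^2 e^{-3\lambda_i y} x_i^2$, and $\operatorname{Hess}(F_i)(X_j, X_j) = \lambda_i \lambda_j e^{-2\lambda_j y} F_i$ for $j \neq i$, all of which are manifestly nonnegative.

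The only real obstacle is executing this Hessian calculation carefully, taking care that the frame is not orthonormal. Once the matrix is shown to be diagonal with nonnegative diagonal entries in the frame $\{Y, X_1, \dots, X_d\}$, positive semidefiniteness of the underlying quadratic form is immediate (the Hessian splits as $\operatorname{Hess}(F_i) = \lambda_i^2 F_i\, dy^2 + (2e^{-\lambda_i y} + \lambda_i^2 e^{-3\lambda_i y} x_i^2)\, dx_i^2 + \sum_{j \neq i} \lambda_i \lambda_j e^{-2\lambda_j y} F_i\, dx_j^2$), and the convexity of $V$ follows from the intersection argument.
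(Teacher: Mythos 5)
Your proof is correct, and it takes a somewhat different route from the paper's, though both rest on the same two pillars: a decomposition of $V$ into an intersection of single-constraint convex sets, and the Levi-Civita formulas of Lemma \ref{levi-civita}. The paper splits $V$ into the $2d$ pieces $V_i^\pm = \{e^{\lambda_i y/2} \pm x_i \ge 0\}$ and proves convexity of each by computing, along curves lying in the boundary hypersurface $\{e^{ay} = x_i\}$, that the second fundamental form with respect to the interior normal is nonnegative whenever $0 \le a \le 2\lambda_i$; it then invokes an external criterion (convexity of a domain is equivalent to nonnegativity of the second fundamental form of its boundary) to conclude. You instead use the $d$ pieces $V_i = \{e^{-\lambda_i y}x_i^2 \le 1\}$ and show that the defining function $F_i = e^{-\lambda_i y}x_i^2$ is geodesically convex by computing $\operatorname{Hess}(F_i) = \lambda_i^2 F_i\, dy^2 + \bigl(2e^{-\lambda_i y} + \lambda_i^2 e^{-3\lambda_i y}x_i^2\bigr)dx_i^2 + \sum_{j\neq i}\lambda_i\lambda_j e^{-2\lambda_j y}F_i\, dx_j^2$, which I have checked against Lemma \ref{levi-civita} and is correct; positive semidefiniteness is indeed basis-independent for a diagonal form with nonnegative entries, and sublevel sets of a function that is convex along every geodesic are convex (geodesics between points of a Cartan--Hadamard manifold being unique). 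Your version is self-contained in that it avoids the cited boundary-curvature criterion, at the cost of computing a full Hessian rather than only the tangential second derivatives along the boundary; the paper's computation is essentially the restriction of yours to the hypersurface. Both are complete proofs.
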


\begin{proof}
Set $V_i^\pm := 
\left\lbrace (y,x) \in M:  
\exp(\lambda_i y/2) \pm x_i \ge 0\right\rbrace.$ Since 
$$V = \bigcap _i V_i^\pm,$$
the
convexity of \(V\) then follows immediately from the convexity of the $V_i^\pm$'s.

 Let us show that $V_i^\pm$ is convex for every $1\leq i \leq d$.
 By \cite[Corollary 1.2]{BCGS}, the convexity of $V_i^\pm$ is equivalent to the fact that the boundary $\partial V_i^\pm$ has non negative second fundamental form with respect to the interior normal.
Notice that, changing the sign of \(x_i\) is an isometry of \((M,g)\), hence it is sufficient to prove that $V_i^-$ is convex for every $1\leq i \leq d$. 
Fix \(i\) and let \(f(y,x) = \exp(ay) -x_i\).  We claim that the hypersurface  \(\lbrace f = 0\rbrace\) has non negative second fundamental form with respct to the interior normal of the domain $\{f \geq 0\}$ if \(0 \le a \le 2\lambda_i\). Taking $a= \lambda _i /2$ will then implies that $V_i^-$ is   convex. 

In order to prove the claim, we consider a curve \(\alpha(t) = (y(t),x(t))\) such that \(x_i(t) = \exp(ay(t))\) for all $t$ and we check that for any such curve, one has
$$df\left(\nabla_{\alpha'}\alpha'\right) \ge 0.$$

Let us compute $\nabla_{\alpha'}\alpha'$. We write
$$
\alpha '(t) = y'(t) \partial _y +
\Sigma _{j=1}^d x'_j (t) \partial _{x_j}.
$$
Setting $Y:= \partial _y$, $X_j := \partial _{x_j}$, we have 
\begin{eqnarray*}
\nabla_{\alpha'}\alpha' &=&
y'' Y+ y' \nabla _Y Y + \sum_{j=1}^d x''_j X_j +
\sum_{j=1}^d y' x'_j  \nabla _Y X_j
\\
&+&\sum_{i=1}^d y'x'_j \nabla _{X_j} Y+
\sum_{j=1}^d x'_j \nabla _{X_j} X_j\\
&=& y'' Y+ y' \nabla _Y Y+  \sum_{j=1}^d x''_j X_j +2
\sum_{j=1}^d y' x'_j  \nabla _Y X_j\\
&+&\sum_{i,j=1}^d x'_i x'_j \nabla _{X_i} X_j.
\end{eqnarray*}
By Lemma \ref{levi-civita}, we deduce


\[\nabla_{\alpha'}\alpha' = \left(y'' + \sum\limits_{j = 1}^{d}\lambda_jx_j'^2 e^{-2\lambda _jy}\right)\partial_y +  \sum\limits_{j = 1}^d (x_j'' - 2\lambda_j x_j'y')\partial_{x_j}.\]

So that, since $x'_i = a  y' \,e^{at}$, $x''_i = (
a^2 y'^2 + ay'')e^{ay}$ and $df = ae^{ay} -dx_i$

\begin{align*}
  df\left(\nabla_{\alpha'}\alpha'\right) 
  &= a e^{ay}\left( y'' + \sum\limits_{j = 1}^d \lambda_j x_j'^2 e^{-2\lambda_j y}\right) - x_i'' + 2\lambda_i x_i'y'  \\
&=  ae^{ay}y'' + a e^{ay}\sum\limits_{j = 1}^d \lambda_j x_j'^2 e^{-2\lambda _j y}- (a^2e^{ay}y'^2 + ae^{ay}y'') + 2 a \lambda_i e^{ay}y'^2  \\
&\ge (2a\lambda_i - a^2)e^{ay}y'^2 \ge 0
\end{align*}
if \(0 \le a \le 2\lambda_i\). This proves that the second fundamental form of $\{f =0\}$ is positive and conclude the proof of 
\end{proof}

\subsection{Volume growth of the approximate horosphere}
In this section, we study the volume growth of the approximate horosphere $\mathcal H$. The idea is to compare the balls of $\mathcal H$ with domains whose volume is easy to compute. 

It follows from the of the approximate horosphere that it can be descomposed into $2d+1$ faces:
\begin{equation}\label{DescH}
\mathcal H = \mathcal H_0 
\cup\left(\bigcup_{1\leq i \leq d} \mathcal H_i^{\pm}\right),    
\end{equation}
where 
$$
\mathcal H_0 = \left\lbrace (y,x)\in M :  y = 0\text{ and }\max\limits_{1 \le i \le d}e^{-\lambda_i y/2}|x_i| \leq 1\right\rbrace
$$
and
$$
\mathcal H_i^{\pm} =\left\lbrace (y,x)\in M : y \le 0\textcolor{red}{, }\max\limits_{1 \le j \le d, j\neq i}e^{-\lambda_j y/2}|x_j| \leq 1 \text{ and} \,\,e^{-\lambda_i y/2}x_i =\pm 1\right\rbrace.
$$
We notice that, since \(\mathcal H\) is the union of smooth submanifolds pasted along their boundary, it inherits the structure of a length space with a well defined volume.  Therefore the Definition \ref{defgrowth} of volume growth above extends to \(\mathcal H\).

\begin{proposition}
\label{approxgrowthlemma}
With respect to the inherited length structure and volume, the approximate horosphere \(\mathcal H\) has volume growth of order \(k\) with
\[k = \frac{\lambda_1 + \cdots + \lambda_d}{\lambda_1}.\] 
\end{proposition}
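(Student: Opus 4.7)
The plan is to verify~\eqref{defgrowth} at the explicit base point $o=(0,0)\in\mathcal H_0$; by the remark following~\eqref{defgrowth} this is enough. The strategy is to bound the intrinsic ball $B_{\mathcal H}(o,r)$ above and below by constant multiples of $r^k$. A direct computation parametrising each face $\mathcal H_i^\pm$ by $(y,(x_j)_{j\ne i})$ with $x_i=\pm e^{\lambda_iy/2}$ shows that the induced metric on $\mathcal H_i^\pm$ is diagonal with $g_{yy}=1+\tfrac{\lambda_i^2}{4}e^{-\lambda_i y}$ and $g_{x_jx_j}=e^{-2\lambda_j y}$, so that its volume form is $\sqrt{1+\tfrac{\lambda_i^2}{4}e^{-\lambda_i y}}\prod_{j\ne i}e^{-\lambda_j y}\,dy\,\prod_{j\neq i}dx_j$. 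Integrating over $y\in[-T,0]$ with $|x_j|\le e^{\lambda_j y/2}$ for $j\ne i$, and using the asymptotic $\sqrt{1+\tfrac{\lambda_i^2}{4}e^{-\lambda_i y}}\sim\tfrac{\lambda_i}{2}e^{-\lambda_iy/2}$ as $y\to-\infty$, one finds
\[\Vol\bigl(\mathcal H_i^\pm\cap\{y\ge -T\}\bigr)\;\asymp\; e^{T\sum_{j}\lambda_j/2},\]
which equals a constant times $r^k$ when $T=\tfrac{2}{\lambda_1}\log r$.

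For the upper bound, the key step is a lower estimate for the intrinsic distance from $o$ to a point $p=(y_0,x_0)\in\mathcal H$ with $y_0\ll 0$. Given a path $\gamma\colon[0,L]\to\mathcal H$ from $o$ to $p$, on each segment contained in some $\mathcal H_i^\pm$ the constraint $x_i=\pm e^{\lambda_iy/2}$ forces $\dot x_i=\pm\tfrac{\lambda_i}{2}e^{\lambda_iy/2}\dot y$, so the speed satisfies $\|\dot\gamma\|\ge\sqrt{1+\tfrac{\lambda_i^2}{4}e^{-\lambda_iy}}\,|\dot y|$. Since for $y\le 0$ the map $\lambda\mapsto\sqrt{1+\tfrac{\lambda^2}{4}e^{-\lambda y}}$ is increasing, the same bound holds with $\lambda_i$ replaced by $\lambda_1$. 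Setting $F(y)=\int_0^y\sqrt{1+\tfrac{\lambda_1^2}{4}e^{-\lambda_1 s}}\,ds$ (which satisfies $|F(y)|\asymp e^{-\lambda_1 y/2}$ as $y\to-\infty$), we obtain
\[\length(\gamma)\;\ge\;\int_0^L \bigl|(F\circ y)'(t)\bigr|\,dt\;\ge\;|F(y_0)|\;\asymp\; e^{-\lambda_1 y_0/2}.\]
Hence $B_{\mathcal H}(o,r)\cap\mathcal H_i^\pm\subset\mathcal H_i^\pm\cap\{y\ge -\tfrac{2}{\lambda_1}\log(Cr)\}$, whose volume by the first step is at most $C'r^k$. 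Summing over the $2d$ faces and adding the bounded contribution of $\mathcal H_0$ yields $\Vol(B_{\mathcal H}(o,r))\le C''\,r^k$.

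For the matching lower bound I exhibit an explicit subset of $B_{\mathcal H}(o,r)\cap\mathcal H_1^+$ of the required volume. Given a target $p=(y_0,e^{\lambda_1y_0/2},x_2,\ldots,x_d)\in\mathcal H_1^+$ with $|x_j|\le e^{\lambda_j y_0/2}$ (in particular $|x_j|\le 1$), I connect $o$ to $p$ by (i) a Euclidean segment in $\mathcal H_0$ from $(0,0,\ldots,0)$ to $(0,1,x_2,\ldots,x_d)\in\mathcal H_0\cap\mathcal H_1^+$, of length at most $\sqrt{d}$, followed by (ii) the descent curve $y\mapsto(y,e^{\lambda_1 y/2},x_2,\ldots,x_d)$ from $y=0$ down to $y=y_0$, which remains inside $\mathcal H_1^+$ because the box constraints $|x_j|\le e^{\lambda_j y/2}$ tighten as $y$ decreases but are still satisfied at $y=y_0$. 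The second leg has length $\asymp e^{-\lambda_1 y_0/2}$, so every such $p$ with $y_0\ge -\tfrac{2}{\lambda_1}\log(r/C)$ lies in $B_{\mathcal H}(o,r)$, and the set of such $p$ has volume $\gtrsim r^k$ by the first step.

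The main obstacle is the lower bound on intrinsic distance in the upper-bound argument: a priori a path on $\mathcal H$ can switch faces arbitrarily often, and one must rule out shortcuts that descend cheaply along one face and then jump to another. The monotonicity observation replacing the face index $i$ by $1$ reduces the whole estimate to a one-variable integration along the ``cheapest'' face and is what makes the argument go through cleanly.
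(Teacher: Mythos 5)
Your proof is correct and follows essentially the same strategy as the paper: compute the volume of the truncated faces $\mathcal H_i^{\pm}\cap\{y\ge -T\}$ (the paper's Lemma \ref{volsublema}), use the descent curve to show these truncations lie in balls of radius $\asymp e^{\lambda_1 T/2}$ (Lemma \ref{lowerlemma}), and bound intrinsic distance from below via the slow variation of the height $y$ along paths in $\mathcal H$ (Lemma \ref{upperlemma}). Your version of the last step — integrating $|(F\circ y)'|$ for an explicit antiderivative $F$, after reducing all faces to $\lambda_1$ by monotonicity — is a slightly cleaner, non-discretized form of the paper's Lipschitz estimate on the annuli $\mathcal H(n+1)\setminus\mathcal H(n)$, but it is the same idea.
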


Before proceeding to the proof of Proposition \ref{approxgrowthlemma}, we introduce some notations and state some lemmas.
We will use $\Vol_{\mathcal H}$ and $\dist_{\mathcal H}$ to denote volume and distance in \(\mathcal H\) respectively.

Let \(\mathcal{B}(r) \subset \mathcal H\) be the set of points at distance at most \(r\) from the compact set \(\mathcal H_0 \). Observe that to prove Proposition \ref{approxgrowthlemma} it is enough to show that the volume of $\mathcal{B}(r)$ is comparable to $r^k$.

For each \(i = 1,\ldots, d\) and $t>0$,
we consider
\begin{equation}\label{approxball}
\mathcal H (t) = \mathcal H \cap \lbrace y \ge -t\rbrace\text{ and } \mathcal{H}_i^{\pm}(t) = \mathcal{H}_i^{\pm} \cap \mathcal{H}(t).
\end{equation}
The proof of Proposition \ref{approxgrowthlemma} will follow from volume estimates for the sets \(\mathcal{H}(t)\) and a comparison of these sets with \(\mathcal{B}(r)\) for suitable \(r\).


\begin{lemma}\label{volsublema}
 There exists \(C_1 > 1\) such that
\begin{equation}\label{mathcalH_i}
C_1^{-1}e^{\frac{\lambda_1+\cdots + \lambda_d}{2}t} \le \Vol_{\mathcal H} (\mathcal H_i^\pm(t))
  \le C_1e^{\frac{\lambda_1+\cdots + \lambda_d}{2}t},
\end{equation}
 for all \(t > 1\) and $i=1,\ldots,d$.
\end{lemma}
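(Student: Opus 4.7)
The plan is to parametrize each face $\mathcal{H}_i^{\pm}$ as a graph and then compute the induced volume element explicitly.

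On $\mathcal{H}_i^{\pm}$ the coordinate $x_i$ is determined by $y$ via $x_i = \pm e^{\lambda_i y/2}$, so we can use $(y, x_j)_{j \neq i}$ with $y \in [-t, 0]$ and $|x_j| \le e^{\lambda_j y/2}$ as coordinates on $\mathcal{H}_i^{\pm}(t)$. Since $dx_i = \pm \tfrac{\lambda_i}{2} e^{\lambda_i y/2}\, dy$, the restriction of $g = dy^2 + \sum_k e^{-2\lambda_k y}dx_k^2$ to the face becomes
\[
\left(1 + \tfrac{\lambda_i^2}{4} e^{-\lambda_i y}\right) dy^2 + \sum_{j \neq i} e^{-2\lambda_j y}\, dx_j^2,
\]
and the coordinate vector fields $\partial_y$ and $\partial_{x_j}$ ($j \neq i$) remain mutually orthogonal. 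Hence the induced volume element factors as
\[
dV_{\mathcal H} = \sqrt{1 + \tfrac{\lambda_i^2}{4}e^{-\lambda_i y}}\; \prod_{j \neq i} e^{-\lambda_j y} \, dy\, \prod_{j \neq i} dx_j.
\]

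Integrating first in the $x_j$ variables over the box $|x_j| \le e^{\lambda_j y/2}$ gives a factor $2^{d-1} \prod_{j \neq i} e^{\lambda_j y/2}$, so
\[
\Vol_{\mathcal H}\bigl(\mathcal H_i^{\pm}(t)\bigr) = 2^{d-1} \int_{-t}^{0} \sqrt{1 + \tfrac{\lambda_i^2}{4}e^{-\lambda_i y}} \; e^{-\frac{y}{2}\sum_{j\neq i}\lambda_j} \, dy.
\]
For $y \le 0$ we have $e^{-\lambda_i y} \ge 1$, so the square root is comparable to $e^{-\lambda_i y/2}$ up to a multiplicative constant depending only on $\lambda_i$, using the elementary bounds $u \le \sqrt{1+u^2} \le 1+u$ with $u = \tfrac{\lambda_i}{2}e^{-\lambda_i y/2}$.

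Substituting this estimate, the integrand is comparable to $e^{-\frac{y}{2}\sum_{j=1}^d \lambda_j}$, and a direct computation of $\int_{-t}^0 e^{-Sy/2}dy = \frac{2}{S}(e^{St/2}-1)$ (with $S = \lambda_1+\cdots+\lambda_d$) gives the two-sided bound \eqref{mathcalH_i} for all $t > 1$. No serious obstacle is expected: the only point requiring a little care is controlling the square root factor uniformly for $y \le 0$, which is handled by the elementary inequalities above.
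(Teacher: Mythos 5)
Your proof is correct and follows essentially the same route as the paper: parametrize the face as a graph over $(y,x_j)_{j\neq i}$, compute the induced volume form, integrate out the $x_j$'s, and bound the resulting one-dimensional integral, noting that the square-root factor is comparable to $e^{-\lambda_i y/2}$ for $y\le 0$. The paper phrases this last step as the boundedness of $e^{\lambda_1 y}+\tfrac{\lambda_1^2}{4}$ away from $0$ and $\infty$, which is the same estimate.
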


\begin{proof}
We prove (\ref{mathcalH_i}) for \(\mathcal H_1^+(t)\), the other cases can be obtained by analogous arguments.
For that purpose we parametrize \(\mathcal H_1^+ (t)\) via the map 
$\varphi : D(t) \rightarrow \mathcal H _1^\pm$,
defined by
\[\varphi(y,x_2,\ldots,x_d) = \bigl(y, e^{\frac{\lambda_1 y}{2}},x_2,\ldots,x_d\bigr),\]
where
\[D(t) = \left\lbrace y\in[-t,0]:-e^{\frac{\lambda_i y}{2}} \le x_i \le e^{\frac{\lambda_i y}{2}}\text{ for }2 \le i \le d\right\rbrace\]
is the domain of parametrization of $\mathcal H_1^+(t)$.
The induced metric on $\mathcal H_1^+(t)$ writes in these coordinates
\[\varphi^*g = \left(1+\frac{\lambda_1^2}{4}e^{-\lambda_1y} \right)dy^2 +e^{-2\lambda_2 y}dx_2^2 + \cdots + e^{-2\lambda_d y}dx_d^2 ,\]
and the associated volume form is
\[\omega = e^{-(\lambda_2+\cdots +\lambda_d)y}\left(1+\frac{\lambda_1^2}{4}e^{-\lambda_1y}\right)^{\frac{1}{2}} dy\wedge dx_2 \wedge \cdots \wedge dx_d.\]
Integrating we obtain
\begin{align*}\Vol_\mathcal H(\mathcal H_1^+(t)) &= \iint\limits_{D(t)}\omega = 2^{d-1}\int\limits_{-t}^0  e^{-\frac{\lambda_2 + \cdots +\lambda_d}{2}y}\left(1+\frac{\lambda_1^2}{4}e^{-\lambda_1y}\right)^{\frac{1}{2}}dy
\\ &= 2^{d-1}\int\limits_{-t}^0  e^{-\frac{\lambda_1 + \cdots +\lambda_d}{2}y}\left( e^{\lambda_1 y}+\frac{\lambda_1^2}{4}\right)^{\frac{1}{2}}  dy.
\end{align*}
The existence of a constant $C_1$ as in \eqref{mathcalH_i} is a consequence of the fact that \(e^{\lambda_1y} + \frac{\lambda_1^2}{4}\) is uniformly bounded away from \(0\) and \(+\infty\) when \(y \le 0\).
\end{proof}

The following two Lemmas aim to compare the sets $\mathcal H (t)$, 
$\mathcal H _1^+(t)$ and $\mathcal{B}(r)$.

\begin{lemma}\label{lowerlemma}
There exists \(C_2 > 0\) such that for all \(t > 0\) one has 
\begin{equation}\label{inc1}
\mathcal H_1^+(t) \subset \mathcal{B}\left(e^{\frac{\lambda_1(t+C_2)}{2}}\right).    
\end{equation}
\end{lemma}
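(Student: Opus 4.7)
The plan is to exhibit, for any point \(p \in \mathcal{H}_1^+(t)\), an explicit path in \(\mathcal H\) from \(p\) to a point of \(\mathcal H_0\) whose length is at most \(e^{\lambda_1(t + C_2)/2}\) for some constant \(C_2\) independent of \(p\) and \(t\).

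Using the parametrization \(\varphi(y, x_2, \ldots, x_d) = (y, e^{\lambda_1 y / 2}, x_2, \ldots, x_d)\) of \(\mathcal H_1^+\) from the proof of Lemma \ref{volsublema}, write \(p = \varphi(y_0, x_2, \ldots, x_d)\) with \(y_0 \in [-t,0]\) and \(|x_j| \le e^{\lambda_j y_0 / 2}\) for \(j \neq 1\). I would then consider the curve \(\gamma(s) = \varphi(s, x_2, \ldots, x_d)\) for \(s \in [y_0, 0]\), which lies entirely in \(\mathcal H_1^+\) because, as \(s\) increases from \(y_0\) to \(0\), the quantity \(e^{\lambda_j s/2}\) is non-decreasing, so the inequalities \(|x_j| \le e^{\lambda_j s/2}\) continue to hold. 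The endpoint \(\gamma(0) = (0, 1, x_2, \ldots, x_d)\) satisfies \(y = 0\) and \(\max_j |x_j| \le 1\), hence belongs to \(\mathcal H_0\).

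Next I would estimate the length of \(\gamma\) in the induced metric. Since only the \(y\)-coordinate varies, the formula for \(\varphi^* g\) from the previous lemma gives
\[\length_{\mathcal H}(\gamma) = \int_{y_0}^0 \sqrt{1 + \tfrac{\lambda_1^2}{4}e^{-\lambda_1 s}}\, ds \le \int_{-t}^0 \left(1 + \tfrac{\lambda_1}{2} e^{-\lambda_1 s / 2}\right) ds = t + e^{\lambda_1 t/2} - 1,\]
using \(\sqrt{1+u^2} \le 1 + u\) for \(u \ge 0\). Since \(t + e^{\lambda_1 t/2} - 1 \le C e^{\lambda_1 t/2}\) for some absolute constant \(C > 0\) and all \(t > 0\), taking \(C_2 = \frac{2\log C}{\lambda_1}\) yields \(\dist_{\mathcal H}(p, \mathcal H_0) \le e^{\lambda_1(t + C_2)/2}\), which is exactly the desired inclusion \eqref{inc1}.

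There is no real obstacle here: the only subtlety is verifying that the straight vertical path in the \(\varphi\)-coordinates actually remains inside the face \(\mathcal H_1^+\), which is immediate from the monotonicity of \(s \mapsto e^{\lambda_j s/2}\). The rest is the length computation, which is dominated by the exponential term \(e^{\lambda_1 t / 2}\) coming from the factor \(\frac{\lambda_1^2}{4} e^{-\lambda_1 s}\) in the induced metric.
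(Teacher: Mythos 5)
Your proof is correct and follows essentially the same route as the paper: both consider the explicit vertical curve \(s \mapsto (s, e^{\lambda_1 s/2}, x_2,\ldots,x_d)\) inside \(\mathcal H_1^+\), bound its length by \(t + e^{\lambda_1 t/2} - 1 \le c\,e^{\lambda_1 t/2}\) using \(\|\alpha'(s)\|^2 = 1 + \tfrac{\lambda_1^2}{4}e^{-\lambda_1 s}\), and set \(C_2 = 2\lambda_1^{-1}\log c\). Your explicit check that the curve stays in the face \(\mathcal H_1^+\) is a detail the paper leaves implicit, but otherwise the arguments coincide.
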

\begin{proof}

Fix \((-u,x) \in \mathcal H_1^+(t)\), with
$u\leq t$,
and consider the curve $\alpha:[-u,0] \to \mathcal H_1^+(t)$ that joins this point to \(\mathcal H_1^+(0)\), defined by
 \[\alpha(s) = \bigl(s, e^{\frac{\lambda_1 s}{2}} ,x_2,\ldots,x_d\bigr).\] We have \[\|\alpha'(s)\|^2 = \frac{\lambda_1^2}{4}e^{-\lambda_1s} +1.\] 
It follows that there exists \(c > 1\) such that
 \[\dist\bigl((-u,x),\mathcal H_1^+(0)\bigr) \le \text{length}(\alpha) \le  u + \frac{\lambda_1}{2}\int_{-u}^0 e^{-\frac{\lambda_1 s}{2}}ds \le ce^{\frac{\lambda_1 t}{2}},\]
 for all \(1\leq u \leq t\).
 
 This establishes the inclusion \eqref{inc1}  for \(C_2 = 2\lambda_1^{-1}\log(c)\).
\end{proof}

\begin{lemma}\label{upperlemma}
There exists \(C_3 > 0\) such that
\begin{equation}\label{inc2}
\mathcal{B}\left(e^{\frac{\lambda_1(t-C_3)}{2}}\right) \subset \mathcal H(t)    
\end{equation}
for all $t > 1$.
\end{lemma}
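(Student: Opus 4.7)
\medskip\noindent\textbf{Proof plan.} The plan is to prove the contrapositive: for a sufficiently large universal constant $C_3>0$, every point $p\in\mathcal H$ with $y$-coordinate $y(p)=-u<-t$ must satisfy $\dist_\mathcal H(p,\mathcal H_0)>e^{\lambda_1(t-C_3)/2}$. I will bound from below the length of an arbitrary rectifiable curve $\gamma:[0,L]\to\mathcal H$ joining $p$ to $\mathcal H_0$. Replacing $L$ by the first hitting time $L'$ of $\mathcal H_0$ and reparametrizing, I may assume that $y(\gamma(s))<0$ for $s\in[0,L')$ and $y(\gamma(L'))=0$.

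The heart of the argument will be a pointwise lower bound on $\|\gamma'(s)\|$ extracted from the face structure \eqref{DescH}. For each index $i$ set $\phi_i(y,x):=e^{-\lambda_i y/2}|x_i|$; then $\phi_i\circ\gamma\le 1$ on $[0,L']$ since $\gamma\subset \overline V$. At almost every $s\in [0,L')$ some tight index $i=i(s)$ exists because $\gamma(s)\in \partial V\cap \{y<0\}$, and at this parameter $\phi_{i(s)}\circ\gamma$ attains its global maximum value $1$; being Lipschitz it is differentiable a.e., and wherever it is both maximal and differentiable its derivative must vanish. Differentiating the identity $|x_{i(s)}(s)|=e^{\lambda_{i(s)}y(s)/2}$ along $\gamma$ then gives $x_{i(s)}'(s)=\pm\frac{\lambda_{i(s)}}{2}e^{\lambda_{i(s)}y(s)/2}y'(s)$, so that
\[\|\gamma'(s)\|^2 \ge y'(s)^2 + e^{-2\lambda_{i(s)}y(s)}\bigl(x_{i(s)}'(s)\bigr)^2 = y'(s)^2\left(1+\frac{\lambda_{i(s)}^2}{4}e^{-\lambda_{i(s)}y(s)}\right).\]
A short calculation shows that $\lambda\mapsto \lambda\, e^{-\lambda y/2}$ is increasing in $\lambda$ whenever $y\le 0$, so the inequality $\lambda_{i(s)}\ge \lambda_1$ yields $\|\gamma'(s)\|\ge \frac{\lambda_1}{2}e^{-\lambda_1 y(s)/2}|y'(s)|$ almost everywhere on $[0,L')$.

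Integrating and applying the Banach indicatrix formula to the continuous function $y\colon [0,L']\to[-u,0]$ (its multiplicity function is at least $1$ on $(-u,0)$ since $y$ takes both endpoint values), I will obtain
\[\length(\gamma)\ge \frac{\lambda_1}{2}\int_0^{L'}e^{-\lambda_1 y(s)/2}|y'(s)|\,ds \ge \frac{\lambda_1}{2}\int_{-u}^{0}e^{-\lambda_1 y/2}\,dy = e^{\lambda_1 u/2}-1.\]
Since $u>t\ge 1$, this right-hand side exceeds $e^{\lambda_1 t/2}-1$, and an elementary check shows that any choice $C_3\ge -\frac{2}{\lambda_1}\log\bigl(1-e^{-\lambda_1/2}\bigr)$ makes $e^{\lambda_1 t/2}-1\ge e^{\lambda_1(t-C_3)/2}$ for every $t\ge 1$, which will close the argument.

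The step I expect to be the main obstacle is the almost-everywhere selection of a tight index together with the resulting differential constraint, given the piecewise-smooth structure of $\mathcal H$. I plan to handle it by applying the one-dimensional Rademacher theorem to each of the finitely many Lipschitz maps $\phi_i\circ\gamma$, intersecting the resulting full-measure sets, and then invoking the elementary fact that a differentiable function attaining a global maximum is stationary there; this will produce a full-measure set of parameters at which a tight index can be chosen and the pointwise bound on $\|\gamma'(s)\|$ is valid, which is all that the integral estimate requires.
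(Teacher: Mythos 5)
Your proof is correct and follows essentially the same route as the paper's: the pointwise bound $\|\gamma'(s)\|\ge\tfrac{\lambda_1}{2}e^{-\lambda_1 y(s)/2}|y'(s)|$ that you extract from the active constraint via Fermat stationarity is exactly the dual of the gradient estimate $\bigl\|\nabla F|_{\mathcal H_i^\pm}\bigr\|\le\tfrac{2}{\lambda_1}e^{\lambda_1 y/2}$ that the paper obtains by parametrizing each face. The remaining differences are cosmetic: you integrate the bound continuously via the Banach indicatrix, whereas the paper sums a discretized Lipschitz estimate over the unit slabs $\mathcal H(n+1)\setminus\mathcal H(n)$, and your a.e.\ selection of a tight index handles the face-switching of the curve somewhat more carefully than the paper's implicit decomposition.
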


\begin{proof}
We will show that there exists a constant $c$ such that, for every face $\mathcal H_i^\pm$, we have
$$
\mathcal{B}\left(e^{\frac{\lambda_1(t-c)}{2}}\right)\cap \mathcal{H}_i^\pm \subset \mathcal H_i^\pm(t).
$$
For that purpose we consider the function
$F :M \rightarrow \mathbb R$, defined by $F(y,x) = y$, and compute, for each $1\leq i \leq d$, an upper bound of the gradient of \(F\) restricted to \(\mathcal H_i^{\pm}\). This will give a lower bound for the distance between $\mathcal H_i^\pm (0)$ and $\mathcal H_i ^\pm \backslash \mathcal H_i^\pm (t)$ 
 in terms of $t$.

Without loss of generality we consider the case of $\mathcal H _i^+$.  We write $(y,\hat x):=(y,x_1,\ldots,\hat{x}_i,\ldots,x_d)$ to denote a point in \(\R^{d}\) obtained by removing the coordinate \(x_i\) from \((y,x)=(y,x_1,\ldots,x_d)\).
We parametrize the set \(\mathcal H_i^+\) via the map $\varphi : \mathbb R^d \rightarrow M$ defined by
\[\varphi(y,x_1,\ldots,\hat{x}_i,\ldots,x_d) = \left(y,x_1,\ldots,e^{\frac{\lambda_i y}{2}},\ldots,x_d\right).\]
The pull-back metric writes
\[\varphi^*g = \left(1+\frac{\lambda_i^2}{4}e^{-\lambda_iy} \right)dy^2+ \sum\limits_{j \neq i} e^{-2\lambda_j y}dx_j^2 .\]
Observe that
$$ \left\{\left( 1+\frac{\lambda _j^2}{4}e^{-\lambda _j y}\right)^{-1/2}\partial _y,\, \partial _{x_1}, ...,\, \partial _{x_{i-1}}, \partial _{x_{i+1}},..., \,\partial _{x_{d}} \right\}$$ is an orthonormal basis at $(y,\hat x)$ for the metric $\varphi ^* g$.

It follows that for every $(y,\hat x) \in \mathbb R^d$, we have $F\circ \varphi (y,\hat x)=y$ and
\begin{align*}
\bigl\|\nabla (F\circ \varphi)(y,\hat x)\bigr\|_{\varphi ^*g}^2 &= \left(1 + \frac{\lambda_i^2}{4}e^{-\lambda_i y}\right)^{-1}+ \sum_{j\neq i} 
\left(\partial _{x_j} (F\circ \varphi)(y, \hat x)\right)^2
\\ &= \left(e^{\lambda_iy} + \frac{\lambda_i^2}{4}\right)^{-1}e^{\lambda_i y}
\\ &\le C^2 e^{\lambda_1 y},
\end{align*}
for \(C = 2/\lambda_1\), all \(i\) and all \(y \le 0\).
We therefore have shown that for every face \(\mathcal H_i^\pm\), one has 
$$\bigl\|\nabla F|_{\mathcal H_{i}^{\pm}}\bigr\| \le C e^{\frac{\lambda_1 y}{2}}
$$
at all points of $\mathcal H_{i}^{\pm}$.

This implies that $F$ restricted to $\mathcal H (n+1)\setminus \mathcal H (n)$ is Lipschitz with constant \(Ce^{-\frac{\lambda_1 n}{2}}\) and thus,
\[\dist\mathcal{H}\bigl(\mathcal H(0), \mathcal H \setminus \mathcal H (n+1)\bigr) \ge \dist_{\mathcal H}\bigl(\mathcal H(n), \mathcal H \setminus \mathcal H(n+1)\bigr) \ge C^{-1}e^{\frac{\lambda_1 n}{2}}.\]
Therefore, we have \eqref{inc2} for all \(t > 1\) and a sufficiently large \(C_3 > 0\).
\end{proof}

We can now end the proof of Proposition 
\ref{approxgrowthlemma}.

\begin{proof}[Proof of Proposition \ref{approxgrowthlemma}]
On the one hand, observe that by Lemma \ref{lowerlemma} we have
\[\mathcal H_1^+(t) \subset  \mathcal{B}\left(e^\frac{\lambda_1(t+C_2)}{2}\right)\]
for all \(t > 1\), which by Lemma \ref{volsublema} implies
\[C_1^{-1}e^{\frac{\lambda_1+\cdots + \lambda_d}{2}t} \le \Vol_{\mathcal H}\left(\mathcal{B}\left(e^{\frac{\lambda_1(t+C_2)}{2}}\right)\right)\]
for all \(t > 1\).
Hence we obtain
\begin{equation}\label{lowervol}
\Vol_\mathcal H\bigl(\mathcal{B}(r)\bigr) \ge C_1^{-1}e^{-\frac{(\lambda_1+\cdots + \lambda_d) C_2}{2}}r^k
\end{equation}
for all \(r > e^{\frac{\lambda_1(1+C_2)}{2}}\).

On the other hand, we deduce from Lemma \ref{upperlemma} that
\[\mathcal{B}\left(e^{\frac{\lambda_1(t-C_3)}{2}}\right) \subset \mathcal H(t),\]
for all \(t > 1\), which by Lemma \ref{volsublema}, implies
\[\Vol_\mathcal H\left(\mathcal{B}\left(e^{\frac{\lambda_1(t-C_3)}{2}}\right)\right) \le C_1 e^{\frac{\lambda_1 + \cdots + \lambda_d}{2} t},\]
for all \(t > 1\). From this it follows that
\begin{equation}\label{uppervol}
Vol_\mathcal H\bigl(\mathcal{B}(r)\bigr) \le C_1 e^{\frac{(\lambda_1+ \cdots +\lambda_d)C_3}{2}} r^k
\end{equation}
for all \(r > e^{\frac{\lambda_1(1-C_3)}{2}}\). 

The inequalities (\ref{lowervol}) and (\ref{uppervol}) conclude the proof.
\end{proof}

\subsection{The approximate horosphere has controlled volume}
In order to compare the volume growth of a horosphere centered at $\xi_-$ with the one of $\mathcal H$, we will need the following homogeneity property: we say that a metric measure space $(X, d, \mu)$ has controlled volume if 
for each $r>0$ there exists $0<v(r)<V(r)$ such that 
\begin{equation}\label{controlledvol}
v(r)\leq \mu(B(p,r)) \leq V(r),
\end{equation}
for all $p\in X$.

\medskip
In what follows, we show that $\bigl(\mathcal H,dist_\mathcal{H},\Vol_\mathcal{H}\bigr)$ has controlled volume.
The idea is that, for every fixed $r>0$, the balls 
$B_\mathcal H (p,r)$ become arbitrarily close to the horizontal horosphere containing $p$
when $p$ tends to $\xi_-$ and hence, its volume is close to the volume of an Euclidean ball of the same radius.

\begin{proposition}\label{lemmaDoubling}
For all $\epsilon>0$ and $r>0$ there exists $y(\epsilon,r)<0$ such that for every $p=(y_p,x_p)\in \mathcal H$ with $y_p\leq y(\epsilon,r)$ we have
\begin{equation}\label{eqlemmaDoubling}
\left|\frac{\Vol_\mathcal{H} \bigl(B_\mathcal H(p,r)\bigr)}{\omega_d r^d}-1\right|\leq \epsilon,
\end{equation}
where $\omega_d$ is the volume of the unitary ball in the Euclidean space $\R^d$. 
\end{proposition}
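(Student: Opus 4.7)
The plan is to compare $(\mathcal{H},\dist_\mathcal{H})$ locally around $p$ with the horizontal horosphere $H := H_{\xi_+}(y_p)$, which is isometric to $\R^d$ and where $\Vol_H\bigl(B_H(\Pi(p),r)\bigr)=\omega_d r^d$, via the vertical projection $\Pi:\mathcal{H}\to H$ defined by $\Pi(y,x)=(y_p,x)$. The key is to show that $\Pi$ restricted to $B_\mathcal{H}(p,r)$ is a near-isometric, essentially bijective map onto $B_H(\Pi(p),r)$ when $y_p$ is very negative.

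First, I would show that $\Pi$ restricted to each face $\mathcal{H}_i^\pm$ is bi-Lipschitz with distortion tending to $1$ as $y_p\to-\infty$. For $\mathcal{H}_1^+$, using the parametrization $\varphi(y,x_2,\ldots,x_d)=(y,e^{\lambda_1 y/2},x_2,\ldots,x_d)$ from Lemma \ref{volsublema}, the intrinsic metric is
\[\varphi^\ast g=\Bigl(1+\tfrac{\lambda_1^2}{4}e^{-\lambda_1 y}\Bigr)dy^2+\sum_{j\neq 1}e^{-2\lambda_j y}\,dx_j^2,\]
while the pullback of the Euclidean horospheric metric $h_{y_p}$ is
\[(\Pi\circ\varphi)^\ast h_{y_p}=\tfrac{\lambda_1^2}{4}e^{\lambda_1(y-2y_p)}dy^2+\sum_{j\neq 1}e^{-2\lambda_j y_p}\,dx_j^2.\]
On $B_\mathcal{H}(p,r)$ one has $|y-y_p|\leq r/\sqrt{1+\tfrac{\lambda_1^2}{4}e^{-\lambda_1 y_p}}\leq\tfrac{2r}{\lambda_1}e^{\lambda_1 y_p/2}\to 0$, so the ratio $e^{-2\lambda_j(y-y_p)}$ between the $dx_j^2$ coefficients converges uniformly to $1$, and a similar bound (using $1+\tfrac{4 e^{\lambda_1 y_p}}{\lambda_1^2}\to 1$) handles the $dy^2$ coefficients. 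Consequently the Jacobian of $\Pi|_{\mathcal{H}_i^\pm}$ relative to the two Riemannian volume forms tends uniformly to $1$ on $B_\mathcal{H}(p,r)$.

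Second, I would argue that $\Pi(B_\mathcal{H}(p,r))$ covers $B_H(\Pi(p),r)$ up to a set of zero $d$-dimensional volume. When $p$ is interior to a single face, the image $\Pi(\mathcal{H}_i^\pm)$ contains a Euclidean neighborhood of $\Pi(p)$ of radius $\gtrsim (1-|\alpha_j|)e^{-\lambda_j y_p/2}\to\infty$, where $\alpha_j=x_{p,j}e^{-\lambda_j y_p/2}\in(-1,1)$, so $B_H(\Pi(p),r)\subset\Pi(\mathcal{H}_i^\pm)$ eventually. When $p$ lies on several faces $\mathcal{H}_{i_1}^\pm\cap\cdots\cap\mathcal{H}_{i_k}^\pm$, the unit normals $N_{i_\ell}$ in $M$ all satisfy $N_{i_\ell}\to-\partial_y$ as $y_p\to-\infty$ (a direct computation from the gradient of $x_i\mp e^{\lambda_i y/2}$ gives $\|\nabla f_{i_\ell}\|^2\sim\tfrac{\lambda_{i_\ell}^2}{4}e^{\lambda_{i_\ell} y}$), so the dihedral angles between adjacent faces tend to $\pi$, and the projected images $\Pi(\mathcal{H}_{i_\ell}^\pm)$ partition a Euclidean neighborhood of $\Pi(p)$ along the codimension-one analytic hypersurfaces $x_{i_\ell}=x_{i_m}^{\lambda_{i_\ell}/\lambda_{i_m}}$ (coming from $\mathcal{H}_{i_\ell}^\pm\cap\mathcal{H}_{i_m}^\pm$), which have zero $d$-dimensional Hausdorff measure.

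Combining the two steps via a change of variables gives
\[\Vol_\mathcal{H}\bigl(B_\mathcal{H}(p,r)\bigr)=(1+o(1))\,\omega_d r^d\quad\text{as } y_p\to-\infty,\]
uniformly in $p\in\mathcal{H}$, from which the required $y(\epsilon,r)$ can be read off. The hardest step is the second one: checking that adjacent faces with different scaling factors $\lambda_{i_\ell}$ glue consistently after projection so that no volume is lost at the seams and that the intrinsic $\dist_\mathcal{H}$ (which can in principle be shortened by crossing faces) agrees with the Euclidean distance in the limit, particularly when $p$ is a corner point lying on many faces simultaneously.
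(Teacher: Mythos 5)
Your proposal follows essentially the same route as the paper: project vertically onto the horizontal horosphere $H_{\xi_+}(y_p)$ via $\Pi_{y_p}(y,x)=(y_p,x)$, use the height bound $|y-y_p|=O(e^{\lambda_1 y_p/2})$ on $B_{\mathcal H}(p,r)$ to show that $d\Pi_{y_p}$ distorts norms on each face by a factor $1\pm o(1)$ as $y_p\to-\infty$, and conclude by sandwiching $\Pi_{y_p}(B_{\mathcal H}(p,r))$ between Euclidean balls of radii $(1\mp\epsilon)r$ together with a Jacobian estimate. The ``hardest step'' you flag at the end is not actually an obstacle, and your dichotomy between interior points and corner points is both non-exhaustive (a ball around an interior point near an edge of a face still straddles several faces, since $(1-|\alpha_j|)e^{-\lambda_j y_p/2}$ need not tend to infinity) and unnecessary: away from $\mathcal H_0$ and from $x=0$, the approximate horosphere is the graph $y=\max_i 2\lambda_i^{-1}\log|x_i|$ over the $x$-coordinates, so $\Pi_{y_p}$ is injective there, the faces project onto regions with pairwise disjoint interiors (no volume is lost or double-counted at the seams), and since your distortion estimate holds uniformly on every face, the length of any path near $p$ is preserved up to $1\pm o(1)$ no matter how many faces it crosses; lifting Euclidean segments through the graph map then yields the lower inclusion $B_H(\Pi_{y_p}(p),(1-\epsilon)r)\subset\Pi_{y_p}(B_{\mathcal H}(p,r))$ with no discussion of dihedral angles. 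One small slip: in your bound for $|y-y_p|$ the metric coefficient must be evaluated at the running height $y$, which may exceed $y_p$; using first the trivial bound $|y-y_p|\le r$ gives the factor $e^{-\lambda_1(y_p+r)}$ in place of $e^{-\lambda_1 y_p}$, which only changes the constant $C(r)$.
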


As a consequence of Proposition \ref{lemmaDoubling} we have what we wanted:

\begin{corollary}\label{corollarycontvol}
The approximate horosphere $\mathcal{H}$ has controlled volume.
\end{corollary}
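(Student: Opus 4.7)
The plan is to split $\mathcal{H}$ into a ``deep'' region, where Proposition \ref{lemmaDoubling} gives direct control on the volumes of balls, and a compact complementary region, where the uniform bounds will come from a compactness argument. Fix $r>0$. Applying Proposition \ref{lemmaDoubling} with $\epsilon = 1/2$ yields a threshold $y_0 = y(1/2, r) < 0$ such that every $p = (y_p, x_p) \in \mathcal{H}$ with $y_p \le y_0$ satisfies $\tfrac{1}{2}\omega_d r^d \le \Vol_\mathcal{H}(B_\mathcal{H}(p,r)) \le \tfrac{3}{2}\omega_d r^d$, which already gives two-sided bounds on this deep piece.

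It then remains to bound $\Vol_\mathcal{H}(B_\mathcal{H}(p,r))$ uniformly for $p$ in the complementary set $K = \{p = (y_p, x_p) \in \mathcal{H} : y_p \ge y_0\}$. First I would argue that $K$ is compact: every point of $\mathcal{H}$ satisfies $y\le 0$ together with $e^{-\lambda_i y/2}|x_i|\le 1$, so $|x_i|\le e^{\lambda_i y/2}\le 1$, and therefore $K$ is closed and bounded in $M$, hence compact. For the upper bound, the closed $r$-neighbourhood of $K$ inside $\mathcal H$ is itself compact with finite volume, and that volume dominates $\Vol_\mathcal{H}(B_\mathcal{H}(p,r))$ for every $p \in K$. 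For the lower bound, I would show that the map $p \mapsto \Vol_\mathcal{H}(B_\mathcal{H}(p,r))$ is lower semicontinuous on $\mathcal{H}$: from the inclusion $B_\mathcal{H}\bigl(p, r - \dist_\mathcal{H}(p,p_n)\bigr)\subset B_\mathcal{H}(p_n, r)$ and continuity of measure in the radius, one obtains $\liminf_n \Vol_\mathcal{H}(B_\mathcal{H}(p_n,r))\ge \Vol_\mathcal{H}(B_\mathcal{H}(p,r))$ whenever $p_n \to p$. A lower semicontinuous function on a compact set attains its infimum, and that infimum equals the volume of a nonempty open subset of a $d$-dimensional piecewise smooth submanifold, hence is strictly positive. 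Combining the deep and compact cases yields the required $v(r)$ and $V(r)$.

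The main potential obstacle is that $\mathcal H$ is not smooth but rather a union of smooth faces pasted along their ``corners'' (the common boundaries of $\mathcal H_0$ with the $\mathcal H_i^{\pm}$, and the pairwise intersections of the $\mathcal H_i^{\pm}$), so one might initially worry that uniform lower bounds on the volumes of intrinsic balls could degenerate near these corners. The lower semicontinuity argument sidesteps the difficulty, since it only requires pointwise positivity of $\Vol_\mathcal{H}(B_\mathcal{H}(p,r))$ for each individual $p \in K$, which is immediate from the fact that $\mathcal H$ is $d$-dimensional everywhere.
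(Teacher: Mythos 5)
Your proposal is correct and follows essentially the same route as the paper's proof: apply Proposition \ref{lemmaDoubling} with $\epsilon=1/2$ on the deep part of $\mathcal H$, and use compactness of $\mathcal H\cap\{y(1/2,r)\le y\le 0\}$ to bound the volumes of balls centered there. You actually supply more detail than the paper (which simply asserts the existence of the bounds $v_1(r),v_2(r)$ on the compact piece), and your lower-semicontinuity argument for the positivity of the infimum is a valid way to justify that assertion.
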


\begin{proof}
Let us fix $\epsilon = 1/2$ and $r>0$. By Proposition \ref{lemmaDoubling}, for every $p=(y_p,x_p)\in \mathcal H$ with $y_p\leq y(1/2,r)$ we have
\begin{equation}\label{ineq-1}
    \frac{\omega _d r^d}{2} \leq
    \Vol_\mathcal{H} \bigl(B_{\mathcal H} (p,r)\bigr) \leq
    \frac{3\omega _d r^d}{2}.
\end{equation}
The set $\mathcal H \cap \bigl\{p=(y,x) \in M : y(1/2,r) \leq y \leq 0 \bigr\}  $ is compact, hence the volume
of balls of radius $r$ centered on it satisfies
\begin{equation}\label{ineq-2}
    v_1 (R) \leq
    \Vol_\mathcal{H} B_{\mathcal H} (p,R) \leq
    v_2 (R)
\end{equation}
for some $v_1(r)$ and $v_2(r)$. 

We finish the proof by taking
$$v(r) := \min \left\{v_1 (r), \frac{\omega _d r^d}{2}\right\}\text{ and }V(r) := \max\left\{v_2 (r), \frac{3\omega _d r^d}{2}\right\}.$$
\end{proof}

To prove Proposition \ref{lemmaDoubling} we need the two following lemmas saying that a ball $B_{\mathcal H}(p,r) \subset \mathcal H$ becomes almost horizontal when $p$ tends to $\xi _{-}$. 


\begin{lemma}\label{lemmaDoub1}
For every $r>0$ there exists $C(r)>0$ such that if $p=(y_p, x_p)\in \mathcal H$ with $y_p\leq -R$ and $q=(y,x)\in B_{\mathcal H}(p,r)$, then
$$|y-y_p|\leq C(r)e^{\frac{\lambda_1 y_p}{2}}.$$
In particular, 
$\bigl|b_{\xi^+} (q) - b_{\xi^+} (p)\bigr| \leq C(r)e^{\frac{\lambda_1 y_p}{2}}.$
\end{lemma}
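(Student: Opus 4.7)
The plan rests on the Lipschitz estimate for the height function $F(y,x)=y$ that was already isolated inside the proof of Lemma \ref{upperlemma}: on every smooth face $\mathcal H_i^\pm$ one has $\|\nabla F\|_{\mathcal H} \leq C e^{\lambda_1 y/2}$ with the \emph{uniform} constant $C = 2/\lambda_1$ independent of $i$. I would pick a rectifiable curve $\gamma:[0,L]\to\mathcal H$, $L\le r$, joining $p$ to $q$ in $\mathcal H$ (possibly of length slightly exceeding $r$ and then passing to the infimum), parametrize by arc length, and set $y(s)=F(\gamma(s))$. The gradient bound then translates into the pointwise differential inequality $|y'(s)|\leq C e^{\lambda_1 y(s)/2}$ for almost every $s$.

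The key trick is a one-line change of variables. Setting $u(s) = e^{-\lambda_1 y(s)/2}$, the chain rule gives $|u'(s)|\leq C\lambda_1/2$, so integrating from $0$ to $s$ yields the purely \emph{additive} bound
$$|u(s) - u(0)| \;\leq\; \tfrac{C\lambda_1 r}{2},$$
valid without any a priori control on how far $y(s)$ has drifted from $y_p$. This is the step that does the real work: it tames the a priori feedback between the size of $|y'(s)|$ and that of $y(s)$.

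The final and most delicate step is to convert this into the desired multiplicative bound. When $y_p$ is so negative that $u(0) = e^{-\lambda_1 y_p/2}$ exceeds $C\lambda_1 r$ (this fixes the threshold $R=R(r)$ in the hypothesis $y_p\leq -R$), the relative increment $|u(s)-u(0)|/u(0)$ is at most $1/2$, and the elementary estimate $|\log(1+t)|\leq 2|t|$ for $|t|\leq 1/2$ gives
$$|y(s)-y_p| \;=\; \tfrac{2}{\lambda_1}\,\bigl|\log\bigl(u(s)/u(0)\bigr)\bigr| \;\leq\; \tfrac{4}{\lambda_1}\cdot\tfrac{|u(s)-u(0)|}{u(0)} \;\leq\; C(r)\,e^{\lambda_1 y_p/2},$$
with $C(r) = 2Cr = 4r/\lambda_1$. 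Evaluated at $s=L$ this is exactly the stated inequality, and the ``in particular'' clause is immediate from the identity $b_{\xi_+}(y,x) = -y$ recorded at the start of Section 2.

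The only subtlety I foresee is that $\gamma$ may cross the creases where distinct faces $\mathcal H_i^\pm$ meet, so $s\mapsto y(s)$ is only piecewise smooth. This is harmless: the gradient bound has the \emph{same} constant on every face and $F$ is continuous across the creases, hence $u$ is absolutely continuous on $[0,L]$ with $|u'|\leq C\lambda_1/2$ a.e., and the ODE argument requires no modification.
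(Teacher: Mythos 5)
Your proof is correct and is essentially the paper's argument in different packaging: the paper derives the same pointwise differential inequality $|\dot y|\le \mathrm{const}\cdot e^{\lambda_1 y(s)/2}$ along unit-speed curves in $\mathcal H$ (via a face-by-face lower bound on the length element rather than by invoking the gradient estimate from Lemma \ref{upperlemma}), and integrates it after subdividing into intervals where $\dot y$ has constant sign, linearizing the exponential and using the a priori bound $|y-y_p|\le r$, whereas you integrate the substituted quantity $u=e^{-\lambda_1 y/2}$ globally and take logarithms at the end under the threshold hypothesis on $y_p$. Both routes yield the stated estimate with an explicit $C(r)$, so there is nothing to fix.
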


\begin{proof} Let $\gamma:[0,1]\to \mathcal{H}$ be a geodesic from $p$ to $q$ with $\length(\gamma)=\dist_{\mathcal H}(p,q)\leq r$. We write $\gamma(t)=\bigl(y(t),x_1(t),\ldots,x_d(t)\bigr)$.

Decompose $\gamma$ into $\gamma_j:[t_j,t_{j+1}]\to \mathcal H_{i_j}^\pm$, where 
$$\gamma_j(t)=\left(y(t), x_1(t),\ldots,x_{i_j-1}(t),\pm e^{\frac{\lambda_{i_j}y(t)}{2}},x_{i_j+1}(t),\ldots,x_d(t) \right).$$
The length of $\gamma_j$ satisfies
$$\length(\gamma_j)\geq \int_{t_j}^{t_{j+1}}\frac{\lambda_{i_j}|\dot{y}(t)|}{2}e^{-\frac{\lambda_{i_j}y(t)}{2}}\,dt.$$
Approximating $y(t)$ by a Morse function, we can assume without loss of generality that each interval $[t_j,t_{j+1}]$ subdivides into sub-intervals $I_{jk}=[s_k^j,s_{k+1}^j]$ where $\dot{y}(t)$ has constant sign. We then have 
$$\length\left(\gamma_j|_{I_{jk}}\right)\geq \pm\int_{s^j_k}^{s^j_{k+1}}\frac{\lambda_{i_j}\dot{y}(t)}{2}e^{-\frac{\lambda_{i_j}y(t)}{2}}\,dt,$$
and setting $u=y(t)$ we get
$$\length\left(\gamma_j|_{I_{jk}}\right)\geq \left|e^{-\frac{\lambda_{i_j}y\left(s^j_{k+1}\right)}{2}}-e^{-\frac{\lambda_{i_j}y\left(s^j_{k}\right)}{2}}\right|.$$

Assume for example $y\bigl(s_k^j\bigr)\leq y\bigl(s_{k+1}^j\bigr)\leq 0$, then
\begin{align*}
\length(\gamma_j|_{I_{jk}}) &\geq e^{-\frac{\lambda_{i_j}y\left(s^j_{k}\right)}{2}}-e^{-\frac{\lambda_{i_j}y\left(s^j_{k+1}\right)}{2}}\\
&\geq e^{-\frac{\lambda_{i_j}y\left(s^j_{k+1}\right)}{2}}\left( e^{\frac{\lambda_{i_j}y\left(s^j_{k+1}\right)}{2}-\frac{\lambda_{i_j}y\left(s^j_{k}\right)}{2}} -1 \right)\\
&\geq e^{-\frac{\lambda_{i_j}y\left(s^j_{k+1}\right)}{2}}\frac{\lambda_{i_j}}{2}\left(y\bigl(s^j_{k+1}\bigr)-y\bigl(s^j_{k}\bigr)\right)\\
&\geq e^{-\frac{\lambda_1(y_p +r)}{2}}\frac{\lambda_1}{2}\left(y\bigl(s^j_{k+1}\bigr)-y\bigl(s^j_{k}\bigr)\right)
\end{align*}
The other case is analogous, then we get
$$\left|y\bigl(s^j_{k+1}\bigr)-y\bigl(s^j_{k}\bigr)\right|\leq 2\lambda_1^{-1}e^{\frac{\lambda_1 (y_p +r)}{2}}\length(\gamma_j|_{I_{jk}});$$
hence,
\begin{align*}
|y_p-y|=\left|\sum_{j,k} y\bigl(s^j_{k+1}\bigr)-y\bigl(s^j_{k}\bigr)\right| &\leq \sum_{j,k}\left|y\bigl(s^j_{k+1}\bigr)-y\bigl(s^j_{k}\bigr)\right| \\
&\leq 2\lambda_1^{-1}e^{\frac{\lambda _1 r}{2}}
e^{\frac{\lambda_1y_p}{2}}\length(\gamma).
\end{align*}
That is,
\begin{equation}\label{lD12}
|y_p-y|\leq 2\lambda_1^{-1}e^{\frac{\lambda _1 r}{2}}
e^{\frac{\lambda_1y_p}{2}}r,
\end{equation}
which concludes the proof by taking $C(r) = 2\lambda_1^{-1} re^{\frac{\lambda _1 r}{2}} $.
\end{proof}

Given $t\in\R$ we define
the projection
$\Pi_{t}: M\to H_{\xi_+}(t)$ by
\begin{equation*}\label{lD14}
\Pi_{t}(y,x)=(t,x).
\end{equation*}

\begin{lemma}\label{lD2}
For every $\epsilon>0$ and $r>0$ there exists $y(\epsilon,r)<0$ such that if $p=(y_p,x_p)$ with $y_p<y(\epsilon,r)$ and $q\in B_\mathcal H(p,r)$, then
$$(1-\epsilon)\|v\|_q \leq \| d\Pi_{y_p}(q)v\|_{\Pi_{y_p}(q)} \le (1+\epsilon)\|v\|_{q},$$
for all tangent vectors \(v \in T_p \mathcal H_i^{\pm}\), $i=1,\ldots,d$.  So in particular one also has
$$(1-\epsilon)^d \leq \Jac \Pi_{y_p}(q)\leq (1+\epsilon)^d.$$

\end{lemma}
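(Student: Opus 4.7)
The plan is to work in the coordinate chart used for $\mathcal H_i^\pm$ in Lemma \ref{upperlemma}. Recall that $\varphi(y,x_1,\ldots,\widehat{x}_i,\ldots,x_d)=(y,x_1,\ldots,e^{\lambda_i y/2},\ldots,x_d)$ parametrizes $\mathcal H_i^+$ and the induced metric on this face is the diagonal quadratic form
\[\varphi^*g = \left(1+\tfrac{\lambda_i^2}{4}e^{-\lambda_iy}\right)dy^2+ \sum_{j \neq i} e^{-2\lambda_j y}dx_j^2.\]
The composition $\Pi_{y_p}\circ \varphi$ has the same formula, except the first coordinate is replaced by the constant $y_p$; pulling back the Euclidean metric on $H_{\xi_+}(y_p)$ through it produces another diagonal form (with respect to the same coordinate basis):
\[(\Pi_{y_p}\circ\varphi)^*g = \tfrac{\lambda_i^2}{4}e^{-\lambda_i(2y_p-y)}dy^2 + \sum_{j\neq i} e^{-2\lambda_j y_p}dx_j^2.\]
Since both forms are diagonal in the same basis, the bilateral norm inequality for $d\Pi_{y_p}(q)$ reduces to showing that the ratio of each diagonal coefficient lies in $\bigl[(1-\epsilon)^2,(1+\epsilon)^2\bigr]$, because any such ratio of diagonal quadratic forms inherits its range from the coefficient-wise ratios (they are convex combinations).

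Next I would use Lemma \ref{lemmaDoub1} to control $|y-y_p|$ uniformly on $B_{\mathcal H}(p,r)$: for $q=(y,x)$ in this ball one has $|y-y_p|\le C(r)e^{\lambda_1 y_p/2}$, which tends to $0$ as $y_p\to-\infty$. For the $x_j$-coefficients the ratio equals $e^{2\lambda_j(y-y_p)}$, which converges to $1$ uniformly in $q$ as $y_p\to-\infty$. For the $dy^2$ coefficient, factoring gives
\[\frac{\tfrac{\lambda_i^2}{4}e^{-\lambda_i(2y_p-y)}}{1+\tfrac{\lambda_i^2}{4}e^{-\lambda_iy}} = \frac{e^{2\lambda_i(y-y_p)}}{1+\tfrac{4}{\lambda_i^2}e^{\lambda_iy}},\]
and since $y\le y_p+C(r)e^{\lambda_1 y_p/2}\to -\infty$, both the numerator tends to $1$ and the denominator tends to $1$ uniformly. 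Choosing $y(\epsilon,r)$ negative enough to make all these ratios fall inside $\bigl[(1-\epsilon)^2,(1+\epsilon)^2\bigr]$ yields the desired two-sided norm bound.

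The Jacobian bound is then a purely linear-algebraic consequence. The two-sided inequality $(1-\epsilon)\|v\|_q\le \|d\Pi_{y_p}(q)v\|_{\Pi_{y_p}(q)}\le (1+\epsilon)\|v\|_q$ for all $v\in T_q\mathcal H_i^\pm$ means that every singular value of $d\Pi_{y_p}(q)$ lies in $[1-\epsilon,1+\epsilon]$, so the absolute value of its determinant, which is the Jacobian of $\Pi_{y_p}$ restricted to $\mathcal H_i^\pm$ at $q$, lies in $\bigl[(1-\epsilon)^d,(1+\epsilon)^d\bigr]$.

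The only subtle point I expect is verifying uniformity across all faces $\mathcal H_i^\pm$ and across all $q\in B_{\mathcal H}(p,r)$ simultaneously: one must pick $y(\epsilon,r)$ small enough to work for every $i=1,\ldots,d$, which is straightforward since there are finitely many faces and the bounds depend only on $\lambda_1,\ldots,\lambda_d,r,\epsilon$. The key geometric input is Lemma \ref{lemmaDoub1}, which guarantees that the ``tilt'' $|y-y_p|$ inside an $r$-ball of $\mathcal H$ decays as $p$ sinks to $\xi_-$; without this, the $dy^2$-coefficient comparison would fail.
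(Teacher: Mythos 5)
Your proof is correct and follows essentially the same route as the paper: both reduce the statement to computing the singular values of $d\Pi_{y_p}(q)$ on each face (you via the two diagonal pullback metrics in the chart $\varphi$, the paper via the orthogonal frame $X_1,\ldots,\widehat{X}_j,\ldots,X_d,Z_j$, which is the same computation), and both invoke Lemma \ref{lemmaDoub1} to send the resulting ratios to $1$ uniformly as $y_p\to-\infty$. The convex-combination argument for diagonal forms and the singular-value bound for the Jacobian are both sound.
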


\begin{proof}
Let $q=\Bigl(y,x_1,\ldots,x_{j-1},\pm e^{\frac{\lambda_jy}{2}},x_{j+1},\ldots,x_d\Bigr)\in \mathcal H^\pm _j$. The tangent spaces to $\mathcal H^\pm _j$ at $q$ is generated by $\partial_{x_1},\ldots,\widehat{\partial}_{x_j},\ldots,\partial_{x_d}$ and $\partial_y \pm e^{\frac{\lambda_jy}{2}}\partial_{x_j}$. Denote $X_i=\partial_{x_i}$ and $Z_j=\partial_y \pm\frac{\lambda_j}{2} e^{\frac{\lambda_jy}{2}}X_j$.

We have 
$$d\Pi_{y_p}(q)X_i=X_i\text{ for every }i\neq j\text{ and } d\Pi_{y_p}(q)Z_j= \pm \frac{ \lambda_j}{2} e^{\frac{\lambda_jy}{2}}X_j.$$
Notice that $X_1,\ldots,\widehat{X}_j,\ldots,X_d,Z_j$ are orthogonal and so are $X_1,\ldots,X_j,\ldots,X_d$. Moreover, 
$$\|X_i\|_q=e^{-\lambda_iy},\ \|X_i\|_{\Pi_{y_p}(q)}=e^{-\lambda_iy_p},\text{ and }\|Z_j\|_q=\left(1+\frac{\lambda_j^2}{4}e^{-\lambda_jy} \right)^{\frac{1}{2}};$$
thus,
\begin{equation}\label{isom1}\frac{\|d\Pi_{y_p}(q)X_i\|_{\Pi_{y_p}(q)}}{\|X_i\|_q} = e^{\lambda_i(y-y_p)},\end{equation}
for all \(i \neq j\), and
\begin{equation}\label{isom2}\frac{\|d\Pi_{y_p}(q)Z_j\|_{\Pi_{y_p}(q)}}{\|Z_j\|_q} = \frac{\frac{\lambda_j}{2} e^{\frac{\lambda_jy}{2}}\|X_j\|_{\Pi_{y_p}(q)}}{\|Z_j\|_q} = \frac{\frac{\lambda_j}{2} e^{\frac{\lambda_jy}{2}}e^{- \lambda_j y_p}}{(1+ \frac{\lambda_j^2}{4}e^{-\lambda_j y})^{\frac{1}{2}}}.\end{equation}

By Lemma \ref{lemmaDoub1}, the right-hand sides of inequalities \ref{isom1} and \ref{isom2} go to \(1\) as \(y_p \to -\infty\).  Hence, the singular values of \(d\Pi_{y_p}(q)\) can be made arbitrarily close to \(1\) by taking \(y_p\) large, which concludes the proof.
\end{proof}

\begin{proof}[Proof of Proposition \ref{lemmaDoubling}]
Given $\epsilon,r>0$, let \(p = (y_p,x_p)\) be as in Lemma \ref{lD2} and obtain
\begin{equation}\label{lD16}
B_{H_{\xi_+}(y_p)}\bigl(\Pi_{y_p}(p),(1-\epsilon)r\bigr)\subset \Pi_{y_p}\bigl(B_\mathcal H(p,r)\bigr)\subset B_{H_{\xi_+}(y_p)}\bigl(\Pi_{y_p}(p),(1+\epsilon)r\bigr),
\end{equation}
and furthermore
\[(1-\epsilon)^d \le \frac{\Vol_\mathcal{H}\bigl(B_\mathcal H(p,r)\bigr)}{\Vol_{H_{\xi_+}(y_p)}\bigl(\Pi_{y_p}\bigl(B_\mathcal H(p,r)\bigr)\bigr)} \le (1+\epsilon)^d.\]

Hence,
\[(1-\epsilon)^d \omega_d \bigl((1-\epsilon)r\bigr)^d \le \Vol_\mathcal{H}\bigl(B_\mathcal H(p,r))\bigr) \le (1+\epsilon)^d\omega_d \bigl((1+\epsilon)r\bigr)^d,\]
which ends the proof.
\end{proof}

\section{Quasi-isometry}
So far, we have constructed an approximate horosphere $\mathcal H$ and computed its volume growth. Our goal now is to prove that $\mathcal{H}$ have the same volume growth as the actual horosphere  
$H_t:=H_{\xi_-}(t)$ for any $t\in\R$. In order to do this, we first show that $H_t$ and  $\mathcal H$ are quasi-isometric. Recall that a map between metric spaces $\varphi : (X, d_1) \rightarrow (Y, d_2)
$
is a quasi-isometry if there exist \(a\geq 1\) and $b\geq 0$ such that 
\begin{equation}
\label{quasiisom}
a^{-1}d_1(x,x') - b \le d_2(\varphi(x),\varphi(x')) \le a d_1(x,x') + b
\end{equation}
for all $x,x' \in X,$
and for every $y\in Y$ there exists $x\in X$ such that $d_2(y,\varphi(x))\leq b$.

\begin{proposition}\label{quasi-isometry-prop}
For every $t\in \mathbb R$, the horosphere $H_t$ is quasi-isometric to the approximate horosphere $\mathcal H$.
\end{proposition}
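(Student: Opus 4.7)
The plan is to produce explicit Lipschitz quasi-inverse maps $\Phi:\mathcal{H}\to H_t$ and $\Psi:H_t\to \mathcal{H}$ and conclude via a standard quasi-isometry criterion. The construction of $\Phi$ and $\Psi$ rests on the convexity of two closed sublevel sets: the horoball $\overline{HB_{\xi_-}(t)}$, convex because $b_{\xi_-}$ is a convex function on the Hadamard manifold $M$, and the approximate horoball $V\cap\{y\le 0\}$, convex by Lemma \ref{vconvexitylemma} together with the convexity of the half-space $\{y\le 0\}=\{b_{\xi_+}\ge 0\}$, which is itself a horoball centered at $\xi_+$.

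The first step is to define $\Phi$ and $\Psi$ using nearest-point projections: in a Hadamard manifold, the nearest-point projection onto a closed convex set is $1$-Lipschitz for the ambient metric. To define $\Phi$ one projects $p\in \mathcal{H}$ onto $\overline{HB_{\xi_-}(t)}$ and, in the case where the image is not already on the horosphere $H_t$ (which happens precisely when $b_{\xi_-}(p)\le t$), pushes it onto $H_t$ along the integral curve of $\nabla b_{\xi_-}$; by Lemma \ref{approxhorolemma} the correction time is bounded by $C_V+|t|$, so the correction is uniformly bounded in the ambient metric and uniformly Lipschitz. The map $\Psi$ is defined symmetrically using the projection onto $V\cap\{y\le 0\}$. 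The key observation is that the intrinsic metric on each of $\mathcal{H}$ and $H_t$ is the length metric induced from $M$, so the length of a rectifiable curve in $\mathcal{H}$ or $H_t$ equals its length as a curve in $M$. The $1$-Lipschitz property of the projections in $M$ therefore transfers into a Lipschitz property of $\Phi$ and $\Psi$ in the intrinsic metrics, with uniform constants depending only on $t$, $C_V$, and the curvature bounds.

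What remains is the almost-inverse property, namely a uniform bound on $\dist_{\mathcal{H}}(p,\Psi\circ\Phi(p))$ and $\dist_{H_t}(q,\Phi\circ\Psi(q))$. The analogous bound in the ambient metric is immediate from the bounded Hausdorff distance between $\mathcal{H}$ and $H_t$ in $M$ supplied by Lemma \ref{approxhorolemma}. The main obstacle is therefore converting this ambient bound into an intrinsic bound on each side. For $H_t$ this is an immediate consequence of the comparison inequality \eqref{HIF}, which bounds the intrinsic horospherical distance by a $\sinh$ of the ambient distance. For $\mathcal{H}$ the analogous statement is more delicate and is the technical core of the proof: for $p,q\in \mathcal{H}$ with bounded ambient distance, vertical projection onto the $y$-axis being $1$-Lipschitz forces $|h(p)-h(q)|$ to be bounded, the horizontal offset is controlled in the horospherical metric $|\cdot|_y$ via \eqref{HIF} applied to $H_{\xi_+}(y)$, and the explicit induced metric $\bigl(1+\tfrac{\lambda_i^2}{4}e^{-\lambda_i y}\bigr)dy^2+\sum_{j\ne i}e^{-2\lambda_j y}dx_j^2$ on each face $\mathcal{H}_i^{\pm}$ then allows one to build a path in $\mathcal{H}$ of uniformly bounded length, routing through the common edges whenever $p$ and $q$ lie on different faces. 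With this bound in hand, the standard inequality $\dist_{\mathcal{H}}(p,q)\le \dist_{H_t}(\Phi(p),\Phi(q))+2K$ and its converse give the desired quasi-isometry.
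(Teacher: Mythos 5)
There is a genuine gap, and it sits exactly where you locate the ``technical core'' of your argument. Your plan requires that two points of $\mathcal H$ at uniformly bounded \emph{ambient} distance be at uniformly bounded \emph{intrinsic} distance in $\mathcal H$ (this is what the almost-inverse bound $\dist_{\mathcal H}(p,\Psi\circ\Phi(p))\le K$ would follow from). This is false. Take $p=(y,e^{\lambda_1 y/2},0,\dots,0)$ and $q=(y-1,e^{\lambda_1 (y-1)/2},0,\dots,0)$, both on the face $\mathcal H_1^+$. Going up one unit from $q$ and then moving horizontally at height $y$ shows $\dist(p,q)\le 1+(1-e^{-\lambda_1/2})\le 2$. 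But the face metric $\bigl(1+\tfrac{\lambda_1^2}{4}e^{-\lambda_1 y}\bigr)dy^2+\cdots$ that you invoke works \emph{against} you: its $dy$-coefficient blows up as $y\to-\infty$, so any path in $\mathcal H$ joining two points whose heights differ by $1$ at depth $y$ has length at least of order $e^{-\lambda_1 y/2}$ up to the correction of Lemma \ref{lemmaDoub1}; that lemma gives $|y_p-y_q|\le 2\lambda_1^{-1}re^{\lambda_1 r/2}e^{\lambda_1 y_p/2}$ for $q\in B_{\mathcal H}(p,r)$, forcing $\dist_{\mathcal H}(p,q)\gtrsim |y_p|$ here. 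So bounded $|h(p)-h(q)|$ plus controlled horizontal offset does \emph{not} let you build a uniformly short path in $\mathcal H$, and the inequality $\dist_{\mathcal H}(p,q)\le L\,\dist_{H_t}(\Phi(p),\Phi(q))+2K$ cannot be obtained this way. This is precisely why the paper's proof is structured as it is: it never converts an ambient bound into an intrinsic bound \emph{on} $\mathcal H$. Instead it sandwiches $\hat{\mathcal H}$ between two horospheres by two $1$-Lipschitz convex projections $\varphi_1:H_{C_V}\to\hat{\mathcal H}$ and $\varphi_2:\hat{\mathcal H}\to H_{-C_V}$, and compares $\varphi_2\circ\varphi_1$ with the bi-Lipschitz projection $\varphi_3:H_{C_V}\to H_{-C_V}$; the only ambient-to-intrinsic conversion then takes place on genuine horospheres, where \eqref{HIF} is available.

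A second, more easily repaired, error: $\{y\le 0\}=\{b_{\xi_+}\ge 0\}$ is the \emph{complement} of the horoball $HB_{\xi_+}(0)=\{y\ge 0\}$, not a horoball, and it is not convex (the geodesic between $(0,x)$ and $(0,x')$ bulges into $\{y>0\}$). Hence $V\cap\{y\le 0\}$ is not convex and your projection $\Psi$ is not a nearest-point projection onto a convex set; it need not be single-valued or $1$-Lipschitz. The paper avoids this by projecting onto $\hat V=V\cap HB_{\xi_-}(C_V)$, which is an intersection of convex sets and whose boundary differs from $\mathcal H$ only on a compact set. That fix is available to you, but it does not repair the first gap.
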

The proof of Proposition \ref{quasi-isometry-prop} relies on projection arguments on convex subset of $M$ that we describe in what follows.


We consider the sets
$$\hat{V}=V\cap HB_{\xi_-}(C_V)\ \text{ and }\ \hat{\mathcal{H}}=\partial \hat{V},$$
where $C_V$ is the constant of Lemma \ref{approxhorolemma}. Since $\hat{\mathcal{H}}$ and $\mathcal{H}$ differ on a compact set, they are clearly quasi-isometric. Notice that $\hat{\mathcal{H}}$ lies outside $HB_{\xi_-}(-C_V)$ and that $H_{\xi_{-}} (C_V)$ lies outside the interior of $\hat{V}$. 

By Lemma \ref{vconvexitylemma} and the convexity of the horoballs, the sets $HB_{\xi_-}(-C_V)$ and $\hat{V}$ are convex. Therefore, by
\cite[Proposition 2.4, page 176]{bridson-haefliger}, the orthogonal projections 
$\varphi_1:H_{C_V} \to \hat{\mathcal{H}}$ and 
$\varphi_2: \hat{\mathcal{H}} \to H_{-C_V}$ are well defined and $1$-Lipschitz since $M$ is negatively curved.

\begin{lemma}\label{lipschitzlemma}
  The orthogonal projection \(\varphi_1:H_{C_V} \to \hat{\mathcal{H}}\) and the orthogonal projection \(\varphi_2: \hat{\mathcal{H}} \to H_{-C_V}\) are surjective and \(1\)-Lipschitz.
\end{lemma}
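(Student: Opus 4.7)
My plan is to observe that the paragraph preceding the lemma already delivers the $1$-Lipschitz bound and well-definedness of both $\varphi_1$ and $\varphi_2$ (via \cite{bridson-haefliger}), so I would focus on \emph{surjectivity}. I would handle both maps by the same mechanism: from each target point, emit a geodesic ray in an outward-normal direction to the relevant convex set, check that the ray hits the domain of the projection, and conclude by the standard closest-point characterization of outward normals in a negatively curved (hence CAT($0$)) space.

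For $\varphi_2$, I would fix $p \in H_{-C_V}$ and take $v = \nabla b_{\xi_-}(p)$, a unit outward normal to the convex horoball $HB_{\xi_-}(-C_V)$ at $p$. Along the ray $\gamma(s) = \exp_p(sv)$ one has $b_{\xi_-}(\gamma(s)) = -C_V + s$, so by time $s = 2C_V$ the ray has exited $HB_{\xi_-}(C_V)$, and \emph{a fortiori} $\hat{V} \subset HB_{\xi_-}(C_V)$. Since $p \in HB_{\xi_-}(-C_V) \subset \hat{V}$ by Lemma \ref{approxhorolemma}, there is a first exit time $s_0 \in (0, 2C_V]$, and $q := \gamma(s_0)$ sits on $\hat{\mathcal{H}}$. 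The equality $\varphi_2(q) = p$ then follows from the standard angle-comparison argument applied to the outward normal $v$: for any $r \in HB_{\xi_-}(-C_V)$ the angle at $p$ in the triangle $pqr$ is at least $\pi/2$, so $\dist(q,r) \ge \dist(q,p)$.

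For $\varphi_1$ I would reverse the geometry. Given $q \in \hat{\mathcal{H}}$, I would pick any unit $v$ in the outward normal cone of the convex set $\hat{V}$ at $q$ and set $\gamma(s) = \exp_q(sv)$. Then $\gamma(s) \notin \hat{V}$ for $s > 0$, and in particular $\gamma(s) \notin HB_{\xi_-}(-C_V)$, so $\gamma(+\infty) \ne \xi_-$. Since every geodesic ray in $M$ converging to a point of $\partial M \setminus \{\xi_-\}$ satisfies $b_{\xi_-} \to +\infty$, the ray must cross $H_{C_V}$ at some time $s_0 > 0$, yielding $p := \gamma(s_0) \in H_{C_V}$. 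The same outward-normal closest-point argument then gives $\varphi_1(p) = q$.

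The one step I expect to require genuine care is the choice of outward normal for $\varphi_1$: the set $\hat{V}$ has corners where distinct faces of $V$ meet each other and where they meet $H_{C_V}$, so no canonical smooth unit normal exists there. The remedy is to work with the (always non-empty) normal cone of a closed convex subset of a CAT($0$) space rather than a smooth normal, and to invoke the fact that outward normals in that generalized sense still give closest points under the metric projection. Everything else — the existence of first exit times and the fact that $b_{\xi_-} \to +\infty$ along rays not converging to $\xi_-$ — is standard negatively curved geometry already woven into the paper.
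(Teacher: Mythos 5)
Your proposal is correct and follows essentially the same route as the paper: both proofs establish surjectivity by emitting an outward ray from the target point (a ray with constant projection in the sense of \cite[Proposition 2.4]{bridson-haefliger} for $\varphi_1$, and the gradient ray of $b_{\xi_-}$ for $\varphi_2$), showing via $b_{\xi_-}\to+\infty$ (resp.\ Lemma \ref{approxhorolemma}) that the ray meets the domain, and concluding with the closest-point property of projections onto convex sets. Your explicit handling of the corners of $\hat{V}$ via the normal cone is exactly what the cited Bridson--Haefliger statement supplies in the paper's version.
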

\begin{proof}
It remains to show the surjectivity of \(\varphi_1\). We take \(p \in \hat{\mathcal{H}}\). By \cite[Proposition 2.4, (4) page 176]{bridson-haefliger}, there exists a unit speed ray \(\alpha\) starting at \(p\) such that 
$$\dist \bigl(\alpha (t), \hat{V}\bigr)=\dist\bigl(\alpha(t),\hat{\mathcal{H}}\bigr) = \dist(\alpha(t),p) = t$$
and the projection of $\alpha (t)$ on $\hat{\mathcal{H}}$ equals $p$ for every $t\geq 0$.
In particular, when $t\to +\infty$, the distance between $\alpha (t)$ and every geodesic ray asymptotic to $\xi_-$ tends to infinity, thus $\alpha (t)$ converge to $\xi' \neq \xi _{-} \in \partial M$ when $t\to +\infty$. It follows that \( \lim _{t\to \infty} b_{\xi_{-}}(\alpha(t)) = +\infty\). Since 
$b_{\xi_{-}}(\alpha(0)) = b_{\xi_{-}}(p) \leq C_V$, there exists $t\geq 0$ such that $b_{\xi_{-}}(\alpha(t)) = C_V$ and therefore $\alpha (t) \in H_{C_V}$. This proves that \(\varphi_1\) is surjective. Moreover, since the Hausdorff distance between $H_{C_V}$ and $\hat{\mathcal{H}}$ is bounded by $2C_V$, then $t\leq 2C_V$.

For \(\varphi_2\) we argue similarly.  Given \(p \in H_{-C_V}\) we let \(\alpha(t)\) be the (now unique) unit speed geodesic ray with \(\alpha(0) = p\) and \(b_{\xi_-}(\alpha(t)) = t-C_V\). We observe that \(\alpha(2C_V) \in H_{C_V}\) and therefore by Lemma \ref{approxhorolemma} we must have \(\alpha(t) \in \hat{\mathcal{H}}\) for some \(t \in [0,2C_V]\).   Hence \(\varphi _2\) is surjective as claimed.
\end{proof}

\begin{lemma}\label{qitricklemma}
 Let \(\varphi_3:H_{C_V} \to H_{-C_V}\) be the orthogonal projection.  Then \(\varphi_3\) is bi-Lipschitz and there exists \(C > 0\) such that
 \[\dist_{H_{C_V}}\bigl((\varphi_3^{-1}\circ \varphi_2 \circ \varphi_1)(p),p\bigr) \le C,\]
for every $p\in H_{C_V}$.
\end{lemma}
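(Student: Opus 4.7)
The plan is to split the proof into two parts: first, showing that $\varphi_3$ is bi-Lipschitz; and second, bounding the displacement of points under the composition $\varphi_3^{-1}\circ \varphi_2\circ \varphi_1$.

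For the bi-Lipschitz property, the key observation is that orthogonal projection onto the convex horoball $HB_{\xi_-}(-C_V)$ follows the gradient flow of $-\nabla b_{\xi_-}$, so $\varphi_3$ sends $p\in H_{C_V}$ to the unique intersection of the geodesic ray $[p,\xi_-)$ with $H_{-C_V}$. It is $1$-Lipschitz by the same convex-projection argument as in Lemma \ref{lipschitzlemma}. Its inverse is the time-$2C_V$ flow along $\nabla b_{\xi_-}$, and the derivative of this flow at $p'\in H_{-C_V}$ is encoded by a Jacobi field tangent to horospheres centred at $\xi_-$; Rauch comparison, together with the curvature pinching $-\lambda_d^2\le K\le -\lambda_1^2$ of Corollary \ref{CorCurvatura}, yields the pointwise bound $e^{2\lambda_1 C_V}|v|\le |d\varphi_3^{-1}(p')v|\le e^{2\lambda_d C_V}|v|$. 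Integrating this estimate along geodesics inside $H_{-C_V}$ then produces a global Lipschitz constant for $\varphi_3^{-1}$.

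For the bounded-displacement part, set $q_1=\varphi_1(p)$, $q_2=\varphi_2(q_1)$ and $p'=\varphi_3(p)$. By Lemma \ref{approxhorolemma}, $\hat{\mathcal H}$ lies in the annular region between $H_{-C_V}$ and $H_{C_V}$, so the geodesic from $p$ toward $\xi_-$ meets $\hat{\mathcal H}$ within distance $2C_V$, giving $\dist(p,q_1)\le 2C_V$; an analogous argument starting from $q_1$ yields $\dist(q_1,q_2)\le 2C_V$. Combined with $\dist(p,p')=2C_V$ and the triangle inequality, this forces $\dist(p',q_2)\le 6C_V$. Since both $p'$ and $q_2$ lie on the horosphere $H_{-C_V}$, inequality \eqref{HIF} converts this ambient bound into the intrinsic estimate
\[\dist_{H_{-C_V}}(p',q_2)\le \frac{2}{\lambda_d}\sinh(3\lambda_d C_V).\]
Applying the Lipschitz map $\varphi_3^{-1}$ then yields the desired uniform bound on $\dist_{H_{C_V}}\bigl((\varphi_3^{-1}\circ \varphi_2\circ \varphi_1)(p),p\bigr)$.

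The main technical hurdle is the bi-Lipschitz property of $\varphi_3$, which hinges on a Rauch-comparison estimate for Jacobi fields in pinched negative curvature; everything else is soft, combining convex-projection arguments, the Hausdorff proximity guaranteed by Lemma \ref{approxhorolemma}, and the inequality \eqref{HIF} already at our disposal.
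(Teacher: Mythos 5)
Your argument is correct and follows essentially the same route as the paper: $1$-Lipschitzness of $\varphi_3$ from convexity, the stable Jacobi field (Heintze--Im Hof/Rauch) comparison for the Lipschitz constant $e^{2\lambda_d C_V}$ of $\varphi_3^{-1}$, the chain of $2C_V$-bounds giving an ambient displacement of $6C_V$, and inequality \eqref{HIF} to pass to the intrinsic distance. The only (cosmetic) difference is that you apply \eqref{HIF} on $H_{-C_V}$ and then push through $\varphi_3^{-1}$, whereas the paper applies it directly to the pair $\bigl(p,(\varphi_3^{-1}\circ\varphi_2\circ\varphi_1)(p)\bigr)$ on $H_{C_V}$.
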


\begin{proof}
We have that \(\varphi_3\) is \(1\)-Lipschitz since its an orthogonal projection onto a convex domain in a space of non-positive sectional curvature.

To show that \(\varphi _3^{-1}\) is Lipschitz, let \(p(t)\) be unit speed curve in \(H_{C_V}\) and \(\alpha_t(s)\) be the geodesic perpendicular to \(H_{C_V}\) such that \(\alpha_t(0) = p(t)\) and \(\alpha_t(2C_V) = \varphi_3(p(t))\).

Using the comparison theorem for stable Jacobi fields \cite[Theorem 2.4]{1977heintze}, we obtain that 
\[\left\|\frac{d}{dt}(\varphi_3\circ p)(0)\right\|= \left\|\frac{d}{dt} \alpha_0(2C_V)\right\| \ge e^{-2\lambda_d C_V} \left\|\frac{d}{dt} \alpha_0(0)\right\| = e^{-2\lambda_d C_V}\|\dot{p}(0)\|,\]
so that \(\varphi_3^{-1}\) is Lipschitz with constant $e^{2\lambda_d C_V}$.

We now prove the existence of \(C > 0\) with the desired properties.

Set $p_1 = p$,  $p_2 = \varphi_1(p_1)$,  $p_3= \varphi_2(p_2)$, and $p_4 = \varphi_3^{-1}(p_3)$. As in the previous lemma $p_i$ and $p_{i+1}$ are endpoints of a geodesic segment of length bounded by \(2C_V\) for \(i = 1,2\). Moreover, $p_2$ and $p_3$ are endpoints of a geodesic segment of length $2C_V$; thus, by triangular inequality, we have
 \[\dist\bigl((\varphi_3^{-1}\circ \varphi_2 \circ \varphi_1)(p),p\bigr) \le 6C_V.\]

From \cite[Theorem 4.6]{1977heintze} this implies
\[\dist_{H_{C_V}}\bigl((\varphi_3^{-1}\circ \varphi_2 \circ \varphi_1)(p),p\bigr) \le \frac{2}{\lambda_d}\sinh\left(\frac{\lambda_d}{2}6C_V\right),\]
where \(-\lambda_d^2\) is a lower bound for sectional curvature on \(M\).
\end{proof}

\begin{proof}[Proof of Proposition \ref{quasi-isometry-prop}]
Notice that for every $t>t' \in \mathbb R$, the projection of $H_{t}$ onto $H_{t'}$ is bi-Lipschitz. The proof of this fact is the same as the one saying that $\varphi _3$ is bi-Lipschitz in Lemma \ref{qitricklemma}. Therefore, all horospheres centered at $\xi_-$ are quasi-isometric, so it suffices to prove that 
$H_{C_V}$ is quasi-isometric to $\mathcal H$.

Let us prove that the map $\varphi _1 : H_{C_V} \rightarrow \hat{\mathcal{H}}$
is a quasi-isometry. By Lemma \ref{lipschitzlemma}, $\varphi _1$ is surjective, hence it is enough to prove that there exist \(a,b > 0\) such that
 \[a^{-1}\dist_{H_{C_V}}(p,q) - b \le \dist_{\hat{\mathcal{H}}}(\varphi_1(p),\varphi_1(q)) \le \dist_{H_{C_V}}(p,q),\]
 for all \(p,q \in H_{C_V}\).

The upper bound is the \(1\)-Lipschitz property established in Lemma \ref{lipschitzlemma}. For the lower bound, let \(\lambda\) be the Lipschitz constant of \(\varphi_3^{-1}\) and \(C\) be given by Lemma \ref{qitricklemma}.
 
 Given \(p,q \in H_{C_V}\) let \(p' = (\varphi_3^{-1}\circ \varphi_2\circ \varphi_1)(p)\) and \(q' = (\varphi_3^{-1}\circ \varphi_2\circ \varphi_1)(q)\). We have from Lemma \ref{lipschitzlemma} and Lemma \ref{qitricklemma} that
 \[\dist_{H_{C_V}}(p,q) \le 2C + \dist_{H_{C_V}}(p',q') \le 2C + \lambda \dist_{\hat{\mathcal{H}}}(\varphi_1(p),\varphi_1(q)),\]
 which establishes the required lower bound.
\end{proof}

\section{Isometry at infinity}

The goal of this section is to prove Theorem \ref{mainthm}, i.e., that the growth of the non Euclidean horospheres is of order 
$k=(\lambda _1 +...+ \lambda _d)/\lambda _1$. By Proposition \ref{two-isom-class}, all horospheres centered at $\xi' \neq \xi^+$ are isometric to each other, hence it suffices to show that $\mathcal H$ and $H_{C_V}$ have growth of same order and use Proposition \ref{approxgrowthlemma}. 

The proof that $\mathcal H$ and $H_{C_V}$ have volume growth of same order rely on a result of Coulhon-Saloff-Coste that we describe now. In their setting, a metric measure space \((X,d,\mu)\) is said to be \textit{locally doubling} if for each \(r > 0\) there exists \(C_r > 0\) such that
\begin{equation}\label{doubling}
\mu\bigl(B(x,2r)\bigr) \le C_r\mu\bigl(B(x,r)\bigr),
\end{equation}
for all \(x \in M\). 

The following notion of isometry at infinity between measured metric spaces was introduced by Kanai, 
\cite{Kanai,1995coulhon}.

\medskip
\begin{definition}\label{def-isom-inf}
A mapping $\varphi : (X,d_1,\mu)\rightarrow
(Y,d_2,\nu)$
between two metric measured spaces 
is said to be an {\it isometry at infinity} if it is a quasi-isometry and there exists \(C > 0\) such that 
$$C^{-1}\mu\bigl(B(x,1)\bigr) \le \nu\bigl(B(\varphi(x),1)\bigr) \le C \mu\bigl(B(x,1)\bigr)$$ 
for every \(x \in X\).

\end{definition}

The following result is the Proposition 2.2 of \cite{1995coulhon}.

\begin{proposition}\label{coulhonlemma}
Let \((X,d_1,\mu)\) and \((Y,d_2,\nu)\) be two locally doubling metric measure spacesand \(\varphi:X \to Y\) be an isometry at infinity.  Then there exists \(C \geq 1\) such that
\[C^{-1} \mu\bigl(B(x,C^{-1}r)\bigr) \le \nu\bigl(B(\varphi(x),r)\bigr) \le C \mu\bigl(B(x,Cr)\bigr),\]
for all \(x \in X\) and \(r \ge 1\).
\end{proposition}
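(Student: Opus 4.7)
The strategy is a discretization argument. Replace each ball by a finite maximal separated net, transfer the net between $X$ and $Y$ via $\varphi$, and apply the isometry-at-infinity hypothesis together with local doubling to compare the individual unit balls. This turns a global comparison of measures of balls of radius $r$ into a finite bookkeeping exercise at bounded scales.

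\textbf{Upper bound.} To prove $\nu\bigl(B(\varphi(x),r)\bigr) \le C\,\mu\bigl(B(x,Cr)\bigr)$, let $(a,b)$ be the quasi-isometry constants of $\varphi$. Fix once and for all a constant $K = K(a,b)$ chosen so that whenever $y,y' \in Y$ satisfy $d_2(y,y') \ge K$ and $x,x' \in X$ satisfy $d_2(\varphi(x),y) \le b$ and $d_2(\varphi(x'),y') \le b$, the lower bound of the quasi-isometry inequality forces $d_1(x,x') \ge 1$. Choose a maximal $K$-separated family $\{y_i\} \subset B(\varphi(x),r)$, so that $B(\varphi(x),r) \subset \bigcup_i B(y_i,K)$, and estimate
\[\nu\bigl(B(\varphi(x),r)\bigr) \le \sum_i \nu\bigl(B(y_i,K)\bigr) \le C_1 \sum_i \nu\bigl(B(y_i,1)\bigr),\]
where the second step uses iterated local doubling on $\nu$ a bounded number of times (depending on $K$, not on $r$). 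For each $i$, pick $x_i \in X$ with $d_2(\varphi(x_i),y_i) \le b$, and use isometry at infinity together with one more doubling step (to absorb the shift by $b$) to get $\nu\bigl(B(y_i,1)\bigr) \le C_2\,\mu\bigl(B(x_i,1)\bigr)$. By the calibration of $K$, the points $x_i$ are pairwise $1$-separated and all lie in $B(x,Cr)$ for a suitable $C = C(a,b)$; hence disjointness of $\{B(x_i,1/2)\}$ together with one further doubling application gives $\sum_i \mu\bigl(B(x_i,1)\bigr) \le C_3\,\mu\bigl(B(x,Cr)\bigr)$. Chaining the four inequalities yields the upper bound.

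\textbf{Lower bound and main obstacle.} For the lower bound I would choose a quasi-inverse $\psi: Y \to X$ of $\varphi$ and run the symmetric argument at the point $\psi(\varphi(x)) \in X$; since $d_1(\psi(\varphi(x)),x)$ is uniformly bounded by the quasi-isometry constant, one extra doubling step transfers the resulting estimate from $\psi(\varphi(x))$ to $x$. The main technical obstacle I anticipate is the interplay between the scale-dependent doubling constants $C_r$ and the multiplicative distortion of $\varphi$: one must ensure that every invocation of local doubling in the argument takes place at a scale depending only on $a$, $b$ and $K$, not on $r$, so that the cumulative constant absorbed into the final $C$ remains bounded. Calibrating $K$ once and for all in terms of $a$ and $b$, as above, is what ensures this; after that, the remainder of the argument is essentially combinatorial counting of unit balls in a net.
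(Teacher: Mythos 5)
Your argument is correct, but note that the paper does not prove this statement at all: it is quoted verbatim as Proposition~2.2 of Coulhon--Saloff-Coste \cite{1995coulhon}, so there is no internal proof to compare against. Your discretization via maximal separated nets is in fact the standard Kanai-type argument used in that reference, and the upper-bound chain (cover $B(\varphi(x),r)$ by $K$-balls centred at a $K$-separated net, collapse to unit balls by finitely many doubling steps at scales depending only on $a,b,K$, transfer unit balls through the isometry-at-infinity hypothesis, then count the $1$-separated preimages $x_i$ inside $B(x,Cr)$ using disjointness of the balls $B(x_i,1/2)$) is complete and correct; your calibration of $K$ so that the $x_i$ are $1$-separated is exactly the point that keeps all doubling constants independent of $r$. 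The only step you should write out explicitly in the lower bound is that the quasi-inverse $\psi$ is itself an isometry at infinity: the hypothesis gives $\nu\bigl(B(\varphi(\psi(y)),1)\bigr) \asymp \mu\bigl(B(\psi(y),1)\bigr)$, and since $d_2\bigl(\varphi(\psi(y)),y\bigr) \le b$ one compares $B(y,1)$ with $B\bigl(\varphi(\psi(y)),1+b\bigr)$ by finitely many doubling steps to get $\nu\bigl(B(y,1)\bigr) \asymp \mu\bigl(B(\psi(y),1)\bigr)$; after that the symmetric upper bound applied to $\psi$ at $y=\varphi(x)$, together with the bounded displacement $d_1\bigl(\psi(\varphi(x)),x\bigr)$ and a separate trivial estimate for $r$ in a bounded range, yields the stated lower bound.
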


Notice that if a metric measure space has controlled volume, then it is locally doubling. Moreover, if \((X,d_1,\mu)\) and \((Y,d_2,\nu)\) have controlled volume for functions $v_X,V_x$ and $v_Y,V_Y$ respectively, then for every $x\in X$, $y\in Y$ and $r\geq 0$ we have
$$\frac{v_Y(1)}{V_X(1)}\mu\bigl(B(x,1)\bigr) \le \nu\bigl(B(y,1)\bigr) \le \frac{V_Y(1)}{v_X(1)} \mu\bigl(B(x,1)\bigr).$$

This implies the following result:

\begin{corollary}\label{Cor3}
Volume growth is a quasi-isometry invariant between metric measure spaces with controlled volume.    
\end{corollary}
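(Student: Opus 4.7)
The plan is to reduce the statement to Proposition~\ref{coulhonlemma}. Let $(X,d_1,\mu)$ and $(Y,d_2,\nu)$ be two metric measure spaces with controlled volume, with associated bounds $v_X,V_X$ and $v_Y,V_Y$ as in \eqref{controlledvol}, and let $\varphi:X\to Y$ be a quasi-isometry. First I would verify that controlled volume implies locally doubling: for every $x\in X$ and $r>0$,
\[\mu\bigl(B(x,2r)\bigr)\le V_X(2r)\le \frac{V_X(2r)}{v_X(r)}\,\mu\bigl(B(x,r)\bigr),\]
so \eqref{doubling} holds with $C_r=V_X(2r)/v_X(r)$, and analogously for $Y$.

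Next, as already noted in the paragraph preceding the corollary, the controlled-volume assumption immediately yields
\[\frac{v_Y(1)}{V_X(1)}\,\mu\bigl(B(x,1)\bigr)\le \nu\bigl(B(\varphi(x),1)\bigr)\le \frac{V_Y(1)}{v_X(1)}\,\mu\bigl(B(x,1)\bigr)\]
for every $x\in X$, so $\varphi$ is an isometry at infinity in the sense of Definition~\ref{def-isom-inf}. Proposition~\ref{coulhonlemma} then provides a constant $C\ge 1$ with
\[C^{-1}\mu\bigl(B(x,C^{-1}r)\bigr)\le \nu\bigl(B(\varphi(x),r)\bigr)\le C\,\mu\bigl(B(x,Cr)\bigr)\]
for all $x\in X$ and $r\ge 1$.

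To conclude, suppose $X$ has volume growth of order $r^k$ and fix $x_0\in X$ together with constants $C_{x_0},R_{x_0}$ as in \eqref{defgrowth}. For $r\ge \max\{1,CR_{x_0}\}$, inserting the growth bound for $\mu$ at radii $Cr$ and $C^{-1}r$ into the Coulhon--Saloff-Coste sandwich gives
\[C^{-(k+1)}C_{x_0}^{-1}\,r^k\le \nu\bigl(B(\varphi(x_0),r)\bigr)\le C^{k+1}C_{x_0}\,r^k,\]
and since verifying \eqref{defgrowth} at a single point of $Y$ is enough, $Y$ has volume growth of order $r^k$. The converse follows by applying the same argument to a quasi-inverse of $\varphi$, which is again a quasi-isometry between spaces with controlled volume. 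There is no genuine obstacle here: the entire content of the corollary is that the two hypotheses of Proposition~\ref{coulhonlemma}, namely being locally doubling and being related by an isometry at infinity, are automatic from the controlled-volume and quasi-isometry assumptions, so the corollary is essentially a reading of that proposition's conclusion.
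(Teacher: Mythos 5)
Your proposal is correct and follows exactly the route the paper intends: controlled volume gives both the locally doubling property and the unit-ball comparison that makes any quasi-isometry an isometry at infinity, after which Proposition \ref{coulhonlemma} yields the volume sandwich and hence equality of growth orders. This is precisely the argument sketched in the paragraph preceding the corollary, with the final exponent bookkeeping carried out correctly.
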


Since we want to apply the preceding to $H_{C_V}$ and $\mathcal{H}$, we need the following lemma:

\begin{lemma}\label{cont-vol-H}
The horosphere $H_{C_V}$ has  controlled volume.
\end{lemma}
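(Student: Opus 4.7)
The plan is to combine the homogeneity of $M$ under $\mathbb{G}_A$ with the compactness of $\partial M \cong \mathbb{S}^d$, reducing controlled volume on $H_{C_V}$ to the boundedness of a continuous function defined on $\partial M$.

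First I would use the simple transitivity of the isometric $\mathbb{G}_A$-action on $M$ to shift every problem to the base-point. Given $p \in H_{C_V}$, let $\phi_p \in \mathbb{G}_A$ be the unique isometry with $\phi_p(p) = o$. Since both families of generators $\tau_\lambda$ and $T_z$ of $\mathbb{G}_A$ fix $\xi_+$, so does $\phi_p$, and consequently $\xi_p := \phi_p(\xi_-) \in \partial M \setminus \{\xi_+\}$. The image $\phi_p(H_{C_V})$ is the horosphere through $o$ centered at $\xi_p$, which I denote $H(\xi_p)$. Because $\phi_p$ is an isometry of $M$, it restricts to an isometry between $(H_{C_V}, g|_{H_{C_V}})$ and $(H(\xi_p), g|_{H(\xi_p)})$, so
\[\Vol_{H_{C_V}}\bigl(B_{H_{C_V}}(p, r)\bigr) = \Vol_{H(\xi_p)}\bigl(B_{H(\xi_p)}(o, r)\bigr) =: V_r(\xi_p).\]
It therefore suffices to show that $V_r : \partial M \to (0, +\infty)$ is bounded above and bounded away from zero.

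Next I would establish continuity of $V_r$ on the compact space $\partial M$. Because $M$ is homogeneous with pinched negative curvature, the covariant derivatives of the curvature tensor are uniformly bounded, and by the regularity results cited in the introduction (\cite{BCH2,shcherbakov}), the Busemann functions $b_\xi$ are smooth with uniform estimates and depend continuously on $\xi$ in the $C^2$ topology on compact subsets of $M$. Consequently the horospheres $H(\xi) = \{b_\xi = 0\}$ vary continuously as $C^2$-hypersurfaces, their induced Riemannian metrics vary continuously in $C^1$, and the intrinsic ball $B_{H(\xi)}(o, r)$ together with its volume depend continuously on $\xi$. At the exceptional point $\xi_+$, the Busemann function is $b_{\xi_+}(y, x) = -y$ and $H(\xi_+) = \{0\} \times \R^d$ is flat, yielding $V_r(\xi_+) = \omega_d r^d$ and a continuous extension across $\xi_+$. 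Since each ball $B_{H(\xi)}(o, r)$ is a non-empty open subset of a smooth $d$-dimensional submanifold, $V_r$ is strictly positive on $\partial M$. Being strictly positive and continuous on a compact space, it attains a positive minimum $v(r)$ and a finite maximum $V(r)$, and combined with the identity above this gives $v(r) \le \Vol_{H_{C_V}}(B_{H_{C_V}}(p, r)) \le V(r)$ for every $p \in H_{C_V}$.

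The main obstacle is the $C^2$-continuity of $\xi \mapsto b_\xi$ through the transition $\xi \to \xi_+$, where the horosphere becomes flat; this is handled by the uniform smoothness of horofunctions in the homogeneous setting. A more hands-on alternative, closer to the spirit of Proposition \ref{lemmaDoubling}, would be to show directly that for $p \in H_{C_V}$ with sufficiently negative $y$-coordinate, the projection of $B_{H_{C_V}}(p, r)$ onto a nearby horizontal horosphere is near-isometric so that its volume is close to $\omega_d r^d$, and then treat the remaining bounded region by compactness; this route avoids invoking continuous dependence on $\xi$ but requires redoing Lemmas \ref{lemmaDoub1} and \ref{lD2} for $H_{C_V}$ in place of $\mathcal H$.
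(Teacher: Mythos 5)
Your argument is correct in outline but takes a genuinely different route from the paper's. You use the simply transitive isometric action of $\mathbb G_A$ to move each ball $B_{H_{C_V}}(p,r)$ to a ball centered at the base point $o$ in the horosphere through $o$ centered at $\xi_p$, thereby reducing controlled volume to the boundedness above and below of $\xi\mapsto V_r(\xi)$ on the compact space $\partial M$. The paper instead works directly on $H_{C_V}$: the angle estimate \eqref{geodesicview} shows that at points far from the axis $\alpha$ the horosphere is nearly horizontal, so the projection $\Pi_{y_p}$ onto a horizontal (Euclidean) horosphere is near-isometric on balls of radius $r$, and the remaining region is handled by compactness --- this is precisely the ``hands-on alternative'' you sketch in your last paragraph. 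Your soft argument is shorter and more conceptual, but it concentrates the entire content of the lemma into the continuity of $V_r$ on $\partial M$, and the two ingredients you invoke for that are not established by what you cite: (i) \cite{BCH2} and \cite{shcherbakov} concern higher regularity of a fixed horofunction, not continuous dependence of $b_\xi$ on $\xi$; what you actually need is that $\nabla b_\xi(x)$ (the initial vector of the geodesic from $x$ to $\xi$) and the second fundamental forms of horospheres depend continuously on $(x,\xi)$, including through the degenerate point $\xi_+$ --- this is true under pinched negative curvature and is essentially contained in \cite{1977heintze} via the uniform convergence of the second fundamental forms of large spheres, but it must be stated and sourced correctly; (ii) even granting $C^1$-convergence of the hypersurfaces $H(\xi_n)\to H(\xi)$ on the compact set $\overline{B_M(o,r)}$ (which contains all the relevant intrinsic balls, since intrinsic distance dominates extrinsic distance), the convergence of $\Vol\bigl(B_{H(\xi_n)}(o,r)\bigr)$ to $\Vol\bigl(B_{H(\xi)}(o,r)\bigr)$ additionally requires continuity of $s\mapsto \Vol\bigl(B_{H(\xi)}(o,s)\bigr)$ at $s=r$ (metric spheres have measure zero), or a sandwich between radii $r\pm\epsilon$. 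Both points are fixable, so once they are written out your proof is a valid alternative to the paper's.
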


\begin{proof}
Since all horospheres centered at $\xi_-$ are in the same isometry class it suffices to prove that for each 
\(\epsilon>0\) and \(r>0\) there exist \(T = T(\epsilon,r) > 0\) and \(y_0 = y_0(\epsilon,r) < 0\) such that for every \(p=(y_p,x_p)\in H_{T}\) with \(y_p\leq y_0\) we have
\begin{equation}
\left|\frac{\Vol_{H_T} \bigl(B_{H_{T}}(p,r)\bigr)}{\omega_d r^d}-1\right|\leq \epsilon,
\end{equation}
where \(\omega_d\) is the volume of the unitary ball in the Euclidean space \(\R^d\). Then the proof ends by repeating the proof of Corollary \ref{corollarycontvol}.
\medskip

We denote by \(\alpha:\R \to \R\) the geodesic defined by \(\alpha(t) = (0,t)\) in \(M\). Given \(p=(x_p,y_p) \in M\) let \(\beta_{p}^+,\beta_{p}^-:[0,+\infty) \to M\) be the unit speed geodesic rays that are asymptotic to $\alpha$ to the future and the past respectively. By direct calculation with the metric \(g\) one observes that \(\beta_{p}^+(t) = (x_p,y_p+t)\).  

We first observe that, for each \(\epsilon' > 0\), there exists \(D > 0\) such that if \(p \in M\) is such that \(\dist(p, \alpha(\R)) \ge D\) then 
 \begin{equation}\label{geodesicview}\Bigl|g\bigl(\dot{\beta}_{p}^+(0),\dot{\beta}_{p}^-(0)\bigr)\Bigr| = \Bigl|g\bigl(\partial_y,\dot{\beta}_{p}^-(0)\bigr)\Bigr| \ge 1-\epsilon'.
 \end{equation}

To see this, let $D$ and $t_0$ be such that \(D = \dist(p,\alpha(\R)) = \dist(p,\alpha(t_0))\).   We consider a comparison triangle \(\overline{p},\overline{q},\overline{r}\) in the model space with constant curvature \(-\lambda_1^2\) such that \(\dist(\overline{p},\overline{q}) = D,  \dist(\overline{q},\overline{r}) = t, \dist(\overline{p},\overline{r}) = \dist(p,\alpha(t_0+t))\) (remember that $-\lambda_1^2$ is the upper bound on the sectional curvature of $M$).   By \cite[Proposition 1.7 part 1, page 169]{bridson-haefliger} every point in  the geodesic segment \([\overline{q},\overline{r}]\) is at distance at least \(D\) from \(\overline{p}\).   It follows that the angle at \(\overline{p}\) in the comparison triangle goes to \(0\) when \(D \to +\infty\) independently of \(t\).   By \cite[Proposition 1.7 part 4]{bridson-haefliger} this implies that the angle at \(p\) of the ideal triangle with vertices $p,\alpha(t_0)$ and $\xi_+$ is bounded from above by a function that goes to \(0\) when \(D \to +\infty\). The same argument applied to the vertices $p,\alpha(t_0)$ and $\alpha(t_0-t)$ concludes the proof of 
(\ref{geodesicview}).

Now let \(T = T(\epsilon,r) = D + r = -y_0(\epsilon,r) = -y_0\), and \(p = (y_p,x_p) \in H_T\) be such that \(y_p < y_0\).  Since 
$$\dist\bigl(p,\lbrace y \ge 0\rbrace\bigr) \ge |y_p| > D + r\text{, and }\dist\bigl(p,HB_{\xi_-}(0)\rbrace\bigr) \ge T \ge D+r,$$
we obtain \(\dist\bigl(p,\alpha(\R)\bigr) \ge D + r\),  and therefore 
 \begin{equation}\label{gooddist}\dist\bigl(q,\alpha(\R)\bigr) \ge D,\end{equation}
for all \(q \in B_{H_T}(p,r)\).
 
Let \(q = (y,x) \in B_{H_T}(p,r)\) and set \(\Pi_y:M \to H_{\xi_+}(y)\)  the orthogonal projection. From \ref{gooddist} and \ref{geodesicview} we obtain
 \[(1-\epsilon')\|v\|_q \le \|d\Pi_y(q)v\|_{q} \le (1+\epsilon')\|v\|_q,\]
 for all \(q \in B_{H_T}(p,R)\) and \(v \in T_qH_T\).
 
 Composing with the projection \(\Pi_{y_p}\) we obtain
 \[(1-\epsilon')e^{-2\lambda_d|y-y_p|}\|v\|_q \le \|d\Pi_{y_p}(q)v\|_{\Pi_{y_p}(q)} \le (1+\epsilon')e^{2\lambda_d|y-y_p|}\|v\|_q,\]
 for all \(q \in B_{H_T}(p,R)\) and \(v \in T_qH_T\).
 
 To conclude we consider a curve \(\gamma:[0,s] \to B_{H_T}(p,r)\) parametrized by arc-length.  Observe that \(\dot{\beta}_{\gamma(t)}^-(0)\) is a unit normal vector to \(B_{H_T}\) at \(\gamma(t)\) for all \(t\);  hence, by equation \eqref{geodesicview} 
 we obtain
 \[\bigl|y(s)-y(0)\bigr| \le \left|\int\limits_0^{s} g\bigl(\partial_y, \gamma'(t)\bigr) dt\right| \le \epsilon' s.\]
 
 We thus deduce for all \(q \in B_{H_T}(p,r)\) and \(v \in T_pH_T\) that
\[(1-\epsilon')e^{-2\lambda_d r \epsilon'}\|v\|_q \le \|d\Pi_{y_p}(q)v\|_{\Pi_{y_p}(q)} \le (1+\epsilon')e^{2\lambda_d r\epsilon'}\|v\|_q.\]

Therefore, taking \(\epsilon' > 0\) small enough we have
\[(1-\epsilon)^{\frac{1}{2d}}\|v\|_q \le \|d\Pi_{y_p}(q)v\|_{\Pi_{y_p}q} \le (1+\epsilon)^{\frac{1}{2d}}\|v\|_q.\]
  
The proof then finishes by repeating the argument of Proposition \ref{lemmaDoubling}.
\end{proof}

\begin{proof}[Proof of Theorem \ref{mainthm}]
As we say at the beginning of the section, it is enough to give the volume growth for the horosphere $H_{C_V}$.

Since $H_{C_V}$ and $\mathcal{H}$ have both controlled volume by Corollary \ref{corollarycontvol} and Lemma \ref{cont-vol-H}, and they are quasi-isometric by Proposition \ref{quasi-isometry-prop}, we can apply Corollary \ref{Cor3} to ensure that they have volume growth of same order. Finally, by Proposition \ref{approxgrowthlemma} we obtain that $H_{C_V}$ has volume growth of order $r^k$.
\end{proof}

Since an Euclidean horosphere in $M$ has controlled volume and volume growth of order $r^d$, it cannot be quasi-isometric to any non-Euclidean horosphere. This concludes Corollary \ref{quasiisom-cor}.


\bibliographystyle{alpha}
\bibliography{biblio}
\end{document}